\let\oldnl\nl% Store \nl in \oldnl
\newcommand\nonl{%
  \renewcommand{\nl}{\let\nl\oldnl}}% Remove line number for one line
\DeclareRobustCommand{\change}{%
  \@bsphack
  \leavevmode
  \color{red}%
  \@esphack
}
\DeclareRobustCommand{\stopchange}{%
  \@bsphack
  \normalcolor
  \@esphack
}
\numberwithin{table}{section}
\newtheorem{theorem}{\textbf{Theorem}}
\newtheorem{observation}[theorem]{\textbf{Observation}}
\newtheorem{definition}{\textbf{Definition}}
\newtheorem{problem}{\textbf{Problem}}
\newtheorem{proposition}[theorem]{\textbf{Proposition}}
\newtheorem{remark}[theorem]{\textbf{Remark}}
\newtheorem{lemma}[theorem]{\textbf{Lemma}}
\begin{document}

\title{Strong bounds and exact solutions to the minimum broadcast time problem}

\author[Ivanova et al.]{Marika Ivanova\affmark{a,$\ast$}, Dag Haugland\affmark{b} and B{\aa}rd Hennning Tvedt\affmark{c}}
\affil{\affmark{a}Department of Theoretical Computer Science and Mathematical Logic, Charles University, Prague, Czech Republic}
\affil{\affmark{b}Department of Informatics, University of Bergen, Norway}
\affil{\affmark{c}Webstep, Bergen, Norway}

\email{ivanova@ktiml.mff.cuni.cz [M.\ Ivanova]; \\Dag.Haugland@uib.no [D.\ Haugland]; bard.tvedt@webstep.no [B.H.\ Tvedt]}
\thanks{\affmark{$\ast$}Author to whom all correspondence should be addressed (e-mail: ivanova@ktiml.mff.cuni.cz).}

\historydate{Received DD MMMM YYYY; received in revised form DD MMMM YYYY; accepted DD MMMM YYYY}

\begin{abstract}
Given a graph and a subset of its nodes, referred to as source nodes, the minimum broadcast problem asks for the minimum number of steps in which a signal can be transmitted from the sources to all other nodes in the graph.
In each step, the sources and the nodes that already have received the signal can forward it to at most one of their neighbour nodes.
The problem has previously been proved to be NP-hard. 
In the current work, we develop a compact integer programming model for the problem.
We also devise procedures for computing lower bounds on the minimum number of steps required, along with methods for constructing near-optimal solutions.
Computational experiments demonstrate that in a wide range of instances, \change in particular instances \stopchange with sufficiently dense graphs, the lower and upper bounds under study collapse.
In instances where this is not the case, the integer programming model proves strong capabilities in closing the remaining gap,
\change and proves to be considerably more efficient than previously studied models.\stopchange
\end{abstract}

\keywords{Broadcasting; Integer Programming; Bounds; Computational Experiments}
\maketitle

\section{Introduction}
\label{intro}
\footnote{The research was partially supported by OP RDE project No. CZ.02.2.69/0.0/0.0/18\_053/0016976.}
Fast and efficient distribution of information gives rise to many optimisation problems of growing interest.
Information dissemination processes studied in the mathematical and algorithmic literature \citep{fraigniaud94, harutyunyan13, hedetniemi88, hromkovic96}
often fall into one of the categories \emph{gossiping} or \emph{broadcasting}.
When each network node controls its own, unique piece of information, and all pieces are to be disseminated to all nodes,
the process is called gossiping \citep{bermond98, bermond95}.
% When all network nodes control each their unique piece of information, and all nodes need full access to the information, the required all-to-all dissemination is called gossiping.
Dissemination of the information controlled by one particular source node to all network nodes is referred to as broadcasting \citep{mcgarvey16, ravi94},
and multicasting \citep{barnoy00} if a subset of the network nodes are information targets.
If the information is to be stored at the source, and assembled by pieces stored at all other nodes, then the information flows in the reverse of the broadcasting direction,
and the dissemination process is \emph{accumulation}.
Broadcasting and accumulation can both be generalised to processes where only a subset of the nodes need to receive/disseminate information,
while the remaining nodes are available as transit units that pass the information on to neighbouring nodes.

Information dissemination follows a certain \emph{communication model}.
In the \emph{whispering} model, each node sends/receives information to/from at most one other node in its vicinity at \change a \stopchange time.
The \emph{shouting} model corresponds to the case where nodes communicate with all their neighbour nodes simultaneously.
Generalising whispering and shouting, the communication can also be constrained to neighbour subsets of given cardinality.

In the current work, a problem in the domain of broadcasting is studied.
The \emph{minimum broadcast time} (MBT) problem is identified by a graph and a subset of its nodes,
referred to as source nodes.
Each node in the graph corresponds to a communication unit.
The task is to disseminate a signal from the source nodes to all other nodes in a shortest possible time (broadcast time), while abiding by communication rules.
A node is said to be \emph{informed} at a given time if it is a source, or it already has received the signal from some other node.
Otherwise, the node is said to be \emph{uninformed}.
Consequently, the set of informed nodes is initially exactly the set of sources.
Reflecting the fact that communication can be established only between pairs of nodes that are located within a sufficiently close vicinity of each other,
the edge set of the graph consists of potential communication links along which the signal can be transmitted.
\change

Consider time represented by integers $1,2,\ldots$. \stopchange
Agreeing with the whispering model, every informed node can \change forward the signal to at most one uninformed neighbour node at a time. \stopchange
Therefore, the number of informed nodes \change is at most doubled at any time. \stopchange
This communication protocol appears in various practical applications, such as communication among computer processors or telephone networks.
In situations where the signals have to travel large distances, it is typically assumed that the signal is sent to one neighbour at \change a \stopchange time.
Inter-satellite communication networks thus constitute a prominent application area \citep{chu17}.
Particularly, the MBT problem arises when one or a few satellites need to broadcast data quickly by means of time-division multiplexing.
\change

\citet{lima22} mention several other industrial applications of MBT.
Noteworthy among these is a recent application in peer-to-peer network communication, in which significant improvements over a slow Bluetooth mesh were achieved.
According to \citet{lima22}, the problem under study also finds applications in wireless sensor networks \citep{shang10}, industry 4.0 \citep{hocaoglu19}, surveillance \citep{dekker02}, 
robotics \citep{bucantanschi07}, and direct memory access \citep{lazard92}.
\stopchange

The current literature on MBT offers some theoretical results, including complexity and approximability theorems.
Although inexact solution methods also have been proposed, few attempts seem to be made in order to compute the exact optimum,
or to find strong lower bounds on the minimum broadcast time.
The goal of the current text is to fill this gap, and we make the following contributions in that direction:
First, a compact integer \change linear programming (ILP) \stopchange model is developed.
\change
Unlike models applied in previous works \citep{desousa18,desousa18b,lima22}, the ILP model studied in the current text maximises the number of nodes that can be informed within
a given time $t$.
The optimal solution to the MBT problem is then identified as the minimum value of $t$ for which the objective function attains a value identical to the vertex cardinality of the graph.
With access to strong lower and upper bounds on the minimum broadcast time, such a model has to be run for only a few different values of $t$.
The current work demonstrates empirically that such an approach is, in a large proportion of available instances, considerably faster than solving the previously studied ILP models.

The benefit of the new ILP approach grows with increased strength of the bounds on the minimum broadcast time.
Our second contribution is a lower bounding technique, which proves its merit particularly in instances where all shortest paths from the source set to a non-source have moderate length.
Third, we devise an upper bounding algorithm, which in combination with strong lower bounds is able to close the optimality gap in a wide range of instances.
In summary, the current work contributes new methods for (1) exact estimates of, (2) lower bounds on, and (3) upper bounds on the minimum broadcast time.
\stopchange

The remainder of the paper is organised as follows:
Next, we review the current scientific literature on MBT and related problems,
and in Section \ref{sec:def}, a concise problem definition is provided.
The integer \change linear \stopchange program is formulated and discussed in Section \ref{sec:exact}.
Lower and upper bounding methods are derived in Sections \ref{sec:lb} and \ref{sec:ub}, respectively.
Computational experiments are reported in Section \ref{sec:exp}, before the work is concluded by Section \ref{sec:conc}.

\subsection{Literature overview} \label{sec:litrev}
% To be extended such that adjacent territory, including gossiping, is explored.

Deciding whether an instance of MBT has a solution with broadcast time at most $t$ has been shown to be NP-complete \citep{garey79,slater81}. 
For bipartite planar graphs with maximum degree 3, NP-completeness persists even if $t=2$ or if there is only one source \citep{jansen95}.
When $t=2$, the problem also remains NP-complete for cubic planar graphs \citep{middendorf93}, grid graphs with maximum degree 3,
complete grid graphs, chordal graphs, and for split graphs \citep{jansen95}. 
The single-source variant of the decision version of MBT is NP-complete for grid graphs with maximum degree 4, and for chordal graphs \citep{jansen95}.
The problem is known to be polynomial in trees \citep{slater81}.
Whether the problem is NP-complete for split graphs with a single source was stated as an open question \change by \citet{jansen95}, \stopchange
and has to the best of our knowledge not been answered yet.

A number of inexact methods, for both general and special graph classes, have been proposed in the literature during the last three decades.
One of the first works of this category \citep{scheuermann84} 
introduces a dynamic programming algorithm that identifies all maximum matchings in an induced bipartite graph.
Additional contributions of \citet{scheuermann84} include heuristic approaches for near optimal broadcasting.
Among more recent works, \citet{hasson04} describe a \change metaheuristic \stopchange algorithm for MBT, and provide a comparison with other existing methods.
The communication model is considered in an existing satellite navigation system by \citet{chu17}, where a greedy inexact method is proposed together with a mathematical programming model.
Examples of additional efficient heuristics are contributed by e.g.\ \citet{harutyunyan14}, \citet{harutyunyan06}, \citet{lima22}, \citet{desousa18}, and \citet{wang10}.

Approximation algorithms for MBT are studied by \citet{kortsarz95}. 
The authors argue that methods presented by \citet{scheuermann84} provide no guarantee on the performance, and show that wheel-graphs are examples of unfavourable instances.
Another contribution from \citet{kortsarz95} is an $\mathcal{O}(\sqrt{n})$-additive approximation algorithm for broadcasting in general graphs with $n$ nodes.
The same work also provides approximation algorithms for several graph classes with small separators with approximation ratio proportional to the separator size times $\log n$.
An algorithm with $\mathcal{O}\left(\frac{\log n}{\log \log n}\right)$-approximation ratio is given by \citet{elkin03}.
(Throughout the current text, the symbol $\log$ refers to the logarithm with base 2.)
Most of the works cited above consider a single source.

A related problem extensively studied in the literature is the minimum broadcast graph problem \citep{grigni91,mcgarvey16}. 
A broadcast graph supports a broadcast from any node to all other nodes in optimal time $\lceil\log n\rceil$.
For a given integer $n$, a variant of the problem is to find a broadcast graph of $n$ nodes such that the number of edges in the graph is minimised.
In another variant, the maximum node degree rather than the edge cardinality is subject to minimisation.
\citet{mcgarvey16} study ILP models for $c$-broadcast graphs, which is a generalisation where signal transmission to at most $c$ neighbours \change at a time is allowed. \stopchange

Despite a certain resemblance with MBT, the minimum broadcast graph problem is clearly distinguished from \change the problem under study\stopchange,
and will consequently not be considered further in the current work.

\section{Network model and definitions} \label{sec:def}

The communication network is represented by a connected graph $G=(V,E)$ and a subset $S\subseteq V$ referred to as the set of \emph{sources}.
We denote the number of nodes and the number of sources by $n=|V|$ and $\sigma=|S|$, respectively. 
The digraph with nodes $V$ and arcs $(u,v)$ and $(v,u)$ for each $\{u,v\}\in E$ is denoted $\overrightarrow{G}=(V,\overrightarrow{E})$.
Finally, for a node $u\in V$, we define $N(u)=\left\{v\in V: \left\{u,v\right\}\in E\right\}$ as the set of neighbors of node $u$.

\begin{definition} \label{def:broadcasttime}
The \emph{minimum broadcast time} $\tau(G,S)$ of a node set $S\subseteq V$ in $G$ is defined as the smallest integer $t\geq 0$ for which there exist
a sequence $V_0\subseteq\dots\subseteq V_t$ of node sets and a function $\pi:V\setminus S\to V$, such that:
\begin{enumerate}
  \item $V_0=S$ and $V_t=V$, \label{def:boundary}
  \item for all $v\in V\setminus S$, $\{\pi(v),v\}\in E$, \label{def:edge}
  \item for all $k=1,\ldots,t$ and all $v\in V_k$, $\pi(v)\in V_{k-1}$, and \label{def:parent}
  \item for all $k=1,\ldots,t$ and all $u,v\in V_k\setminus V_{k-1}$, $\pi(u)=\pi(v)$ only if $u=v$. \label{def:unique}
\end{enumerate}
\end{definition}

Referring to Section \ref{intro}, the node set $V_k$ is the set of nodes that are informed \change at time \stopchange $k$.
Initially, only the sources are informed ($V_0=S$), whereas all nodes are informed after \change time $t$ \stopchange ($V_t=V$),
and the set of informed nodes is monotonously non-decreasing ($V_{k-1}\subseteq V_k$ for $k=1,\ldots,t$).
The parent function $\pi$ maps each node to the node from which it receives the signal.
Conditions \ref{def:edge}--\ref{def:parent} of Definition \ref{def:broadcasttime} thus reflect that the sender is a neighbour node in $G$,
and that it is informed at an earlier time than the recipient node.
Because each node can send to at most one neighbour node \change at a time\stopchange,
condition \ref{def:unique} states that $\pi$ maps the set of nodes becoming informed \change at time \stopchange $k$ to distinct parent nodes.
The preimage of $v$ under $\pi$, that is, the set of child nodes of $v$, is denoted $\pi^{-1}(v)$.

The optimisation problem in question is formulated as follows:
\begin{problem}[\textsc{Minimum Broadcast Time}]\label{prob:min}
Given $G=(V,E)$ and $S\subseteq V$, find $\tau(G,S)$.
\end{problem}

\begin{definition} \label{def:broadcastgraph}
For any $V_0,\ldots,V_t$ and $\pi$ satisfying the conditions of Definition \ref{def:broadcasttime}, possibly with the exception of $t$ being minimum,
the corresponding \emph{broadcast forest} is
the digraph $D=(V,A)$, where $A=\left\{\left(\pi(v),v\right): v\in V\right\}$.
If $t$ is minimum, $D$ is referred to as a \emph{minimum} broadcast forest.
Each connected component of $D$ is a \emph{communication tree}.
\end{definition}

\noindent
It is easily verified that the communication trees are indeed arborescences, rooted at distinct sources, with arcs pointing away from the source.
Let $T(s)=\left(V(s),A(s)\right)$ denote the communication tree in $D$ rooted at source $s\in S$,
and let $T_k(s)$ be the subtree of $T(s)$ induced by $V(s)\cap V_k$.
Analogously, let $D_k$ be the directed subgraph of $D$ induced by node set $V_k$.
For the sake of notational simplicity, the dependence on $(V_0,\ldots,V_t,\pi)$ is suppressed when referring to the directed graphs introduced here.

\noindent
The degree of node $v$ in graph $G$ is denoted $\deg_G(v)$.
% The set of neighbours of $v\in V$ in $G$ is denoted by $N_G(v)$.
% Whenever there is no risk of confusion, the subscript $G$ is omitted.
For a given subset $U\subseteq V$ of nodes, we define $G[U]$ as the subgraph of $G$ induced by $U$.
% and $N_G(U)=\bigcup_{v\in U}N_G(v)$.
% Likewise, $N_{\overrightarrow{G}}^+(v)=\left\{u\in V:(v,u)\in \overrightarrow{E}\right\}$ and $N_{\overrightarrow{G}}^-(v)=\left\{u\in V:(u,v)\in \overrightarrow{E}\right\}$ denote the neighbour sets of node $v$ in $\overrightarrow{G}$,
% and $N_{\overrightarrow{G}}(v)=N_{\overrightarrow{G}}^+(v)\cup N_{\overrightarrow{G}}^-(v)$.
% Finally, we let $\deg^+_{\overrightarrow{G}}(v)=\left|N_{\overrightarrow{G}}^+(v)\right|$ and $\deg^-_{\overrightarrow{G}}(v)=\left|N_{\overrightarrow{G}}^-(v)\right|$ denote, respectively, the out-degree and the in-degree of $\overrightarrow{G}$,
We let $\deg^+_{\overrightarrow{G}}(v)$ and $\deg^-_{\overrightarrow{G}}(v)$ denote, respectively, the out-degree and the in-degree of node $v$ in $\overrightarrow{G}$,
and we let $\deg_{\overrightarrow{G}}(v)=\deg_{\overrightarrow{G}}^+(v)+\deg_{\overrightarrow{G}}^-(v)$.
When $p$ is a logical proposition, $\delta_p=1$ if $p$ is true, and $\delta_p=0$, otherwise.

\section{Exact methods} \label{sec:exact}

In this section, we formulate an ILP model for Problem \ref{prob:min}, and discuss possible solution strategies. 
\change
First, we give a multi-source version of the model suggested by \citet{desousa18} and pursued by \citet{lima22}, before we show how to formulate some of the constraints more strongly,
and how the decision version of the model can be exploited for faster convergence.

\subsection{Optimisation version: the broadcast time model of \citet{desousa18,desousa18b}}
\label{sec:optbasic}

Given integers $\underline{t}$ and $\bar{t}$ such that $\underline{t}\leq\tau(G,S)\leq\bar{t}$,
define the binary variables ($(u,v)\in \overrightarrow{E}$, $k=1,\ldots,\bar{t}$) 
$$ x_{uv}^k=
\begin{cases} 
1, \text{ if } v\in V_k\setminus V_{k-1} \text{ and } \pi(v)=u,\\ 
0, \text{ otherwise.}
\end{cases}
$$
\noindent
The \stopchange variable $x_{uv}^k$ thus represents the decision whether or not the signal is to be transmitted from node $u$ to node $v$ at time $k$.
\change
Let $z$ be an integer variable representing the broadcast time.

Possibly weak bounds $\underline{t}$ and $\bar{t}$ on the broadcast time $\tau(G,S)$ are easily available. \stopchange
Because $G$ is connected, the cut between any set $V_i$ of informed nodes and its complement is non-empty,
and therefore at least one more node can be informed \change at any time \stopchange.
It follows that $\tau(G,S)\leq n-\sigma$.
The bound is tight in the worst case instance where $S=\{v_1\}$, and $G$ is a path with $v_1$ as one of its end nodes.
\change Further, $\tau(G,S)\geq\delta_{V\setminus S\neq\emptyset}$ is a trivial lower bound.
Problem \ref{prob:min} is formulated as follows \citep{desousa18,desousa18b}: 
\begin{subequations}\label{mod:basic}
\begin{align}
\label{mod:basic:sousa1} \min z, \\ 
\text{s. t.~~~} \label{mod:basic:sousa2} \sum\limits_{v\in N(u)}x_{uv}^1  \leq \delta_{u\in S}, && u\in V,\\
\label{mod:basic:sousa4} \sum\limits_{v\in N(u)}x_{uv}^k \leq 1, &&  u\in V, k=2,\dots,\bar{t},\\
\label{mod:basic:sousa3} \sum\limits_{v\in N(u)}\sum\limits_{k=1}^{\bar{t}}x_{vu}^k = 1, && u\in V\setminus S,\\
\label{mod:basic:sousa5} x_{uv}^k \leq \sum\limits_{\ell=1}^{k-1}\sum\limits_{w\in N(u)\setminus\{v\}}x_{wu}^{\ell}, && u\in V\setminus S, v\in N(u), k=2,\dots,\bar{t},\\
\label{mod:basic:sousa6} z \geq \sum\limits_{k=1}^{\bar{t}}kx_{uv}^k, && u\in V, v\in N(u),\\
\label{mod:basic:sousa78}x_{uv}^k\in\{0,1\}, \underline{t}\leq z\leq\bar{t}, && u\in V, v\in N(u), k=1,\ldots,\bar{t}.&&
\end{align}~
\end{subequations}
By \eqref{mod:basic:sousa2}, every source (every non-source) node $u$ sends the signal to at most one neighbor node $v$ (does not send at all) at time $1$.
Analogously, constraints \eqref{mod:basic:sousa4} state that no node can send to more than one neighbor at a time later than 1.
Constraints \eqref{mod:basic:sousa3} ensure that all nodes eventually get informed.
The requirement that a non-source node $u$ informs a neighbour $v$ at time $k$ only if $u$ is informed by some adjacent node $w$
at an earlier time is modeled by \eqref{mod:basic:sousa5}. 
Lastly, constraints \eqref{mod:basic:sousa6} enforce the broadcast time variable $z$ to take a value no less than $k$ if transmissions take place at time $k$.

\subsection{Decision version: maximising the number of informed nodes}
\label{sec:decbasic}
\stopchange
The nature of MBT suggests another modelling approach \change based on a subset of the binary variables in \stopchange model \eqref{mod:basic}. 
For an integer $t\in[\underline{t},\bar{t}]$, let $\nu(t)$ denote the maximum number of non-source nodes that can be informed within time $t$, which means that
$\tau(G,S)=\min\left\{t: \nu(t)=n-\sigma\right\}$.
Hence, $\tau(G,S)$ is found by evaluating $\nu(t)$ for $t=\underline{t},\ldots,\bar{t}-1$, interrupted by the first occurrence of $\nu(t)=n-\sigma$.
In the worst case, it is observed that $\nu(\bar{t}-1)<n-\sigma$, which leads to the conclusion $\tau(G,S)=\bar{t}$.
The tightness of the upper and lower bound largely affects \change the \stopchange computational efficiency of this procedure.
Clearly, the lower bound $\underline{t}$ allows the \change omission of \stopchange the iterations for $t<\underline{t}$. 
Also, if \change $\nu(t)<n-\sigma$ is observed for $t=\bar{t}-1$, it is concluded \stopchange that $\tau(G,S)=\bar{t}$, and so the iteration for $t=\bar{t}$ does not have to be performed.
\change

Let $sp_u$ denote the number of edges on the shortest path in $G$ from $S$ to node $u\in V$.
Obviously, $u$ is informed no earlier than time $sp_u$, and at earliest it informs a neighbor node $v$ at time $sp_u+1$.
Let the binary variable $x_{uv}^k$ be defined for all $k=sp_u+1,\ldots,t$, and let $x_{uv}^k=0$ for $k\leq sp_u$.
Observe that every MBT instance has an optimal solution where no node $u$ is idle at some time $k\in(sp_u,t)$, while informing some neighbor node at time $k+1$.
To reduce the redundancy in the model, this observation is exploited in the decision version.
Further, the number of constraints is reduced from $\Theta\left(\bar{t}|\overrightarrow{E}|\right)$ (see constraints \eqref{mod:basic:sousa5}) to
$\Theta\left(t|V|\right)$ in the following formulation of the decision problem:

\begin{subequations}\label{mod:basic:dec}
\begin{align}
\label{mod:basic:dec:obj} \nu(t) = \max \sum\limits_{v \in V\setminus S}\sum\limits_{u \in N(v)} \sum\limits_{k=sp_u+1}^{t}x_{uv}^k, \\ 
\text{s. t.~~~} \label{mod:basic:dec:atMost1in} \sum\limits_{v\in N(u)}\sum\limits_{k=sp_v+1}^{t}x_{vu}^k  \leq 1, && u\in V \setminus S,\\
\label{mod:basic:dec:c} \sum\limits_{v\in N(u)}x_{uv}^k \leq\sum\limits_{v\in N(u):sp_v<k-1}x_{vu}^{k-1}+\sum\limits_{v\in N(u)}x_{uv}^{k-1}, && u\in V\setminus S, k=sp_u+1,\dots,t,\\
\label{mod:basic:dec:tcrel1} \sum\limits_{v\in N(u)}x_{uv}^1  \leq 1, &&  u\in S,\\
\label{mod:basic:dec:tcrel2} \sum\limits_{v\in N(u)}x_{uv}^k  \leq \sum\limits_{v\in N(u)}x_{uv}^{k-1}, &&  u\in S, k=2,\ldots,t,\\
\label{mod:basic:dec:dim}x_{uv}^k \in \{0,1\}, && (u,v)\in\overrightarrow{E}, k=sp_u+1,\dots,t.
\end{align}~
\end{subequations}
\stopchange

In the transition from the optimisation model \eqref{mod:basic}, constraints \eqref{mod:basic:sousa3} are replaced by \eqref{mod:basic:dec:atMost1in}.
The constraints are inequalities in the decision version, because \change some nodes may be left uninformed at time $t$.
Constraints \eqref{mod:basic:dec:c} state that node $u$ informs a neighbor at time $k>sp_u$ only if it either did so also at time $k-1$ or received the signal at that time.
It follows from $x_{uv}^{sp_u}=0$ and \eqref{mod:basic:dec:atMost1in} that the right hand side of \eqref{mod:basic:dec:c} is at most 1 for $k=sp_u+1$.
A simple induction argument shows that $\sum\limits_{v\in N(u)}x_{uv}^k \leq 1$ for all $k=sp_u+1,\ldots,t$, and hence \eqref{mod:basic:sousa4} is satisfied.
Likewise, summating \eqref{mod:basic:dec:c} over time yields $\sum\limits_{v\in N(u)}x_{uv}^k\leq\sum\limits_{v\in N(u)}\sum\limits_{\ell=sp_v+1}^{k-1}x_{vu}^{\ell}$,
ensuring that $u$ is informed before informing others.
Because the right hand side of \eqref{mod:basic:dec:c} is no larger than its counterpart in \eqref{mod:basic:sousa4}, \eqref{mod:basic:dec:c} is at least as strong as \eqref{mod:basic:sousa4}.
The constraints \eqref{mod:basic:dec:tcrel1}--\eqref{mod:basic:dec:tcrel2} stating that each source informs at most one neighbor at a time are formulated analogously.
\stopchange
In summary, $\tau(G,S)$ is computed by the following procedure:

\begin{algorithm}
\KwData{$G$, $S$, $\underline{t}$, $\bar{t}$}
\For{$t=\underline{t},\ldots,\bar{t}-1$} {
	Compute $\nu(t)$ by solving \eqref{mod:basic:dec} \\
	\textbf{if} ~$\nu(t)=n-\sigma$ ~\textbf{return} $t$ \\
}
\textbf{return} $\bar{t}$ \\
\caption{Exact solution to \textsc{Minimum Broadcast Time}}
\label{alg:ip}
\end{algorithm}

\begin{remark} \label{rem:ip}
%If Alg.\ \ref{alg:ip} is interrupted after processing an integer $t\in[\underline{t},\bar{t})$ for which $\nu(t)<n-\sigma$,
%a lower bound $t+1$ on the broadcast time $\tau(G,S)$ is identified.
If Alg.\ \ref{alg:ip} is interrupted due to an imposed time limit when processing an integer $t\in[\underline{t},\bar{t})$, 
	the broadcast time $\tau(G,S)$ is not known, but a (possibly tighter) lower bound $t$ on $\tau(G,S)$ is identified.
% This idea is exploited in Section \ref{sec:bounds}, where results from computational experiments with a time-restricted version of the algorithm are reported.
\end{remark}

\begin{remark} \label{rem:binary}
Algorithm \ref{alg:ip} follows the principle of \emph{sequential search}.
While a worst-case analysis suggests that \emph{binary search} concludes in fewer iterations, sequential search is favoured by smaller ILP instances to be solved.
The number of variables and constraints in \eqref{mod:basic:dec} increases linearly with $t$, and the time needed to compute $\nu(t)$ is thus expected to grow exponentially with $t$.
\end{remark}

\section{Lower bounds} \label{sec:lb}
Strong lower bounds on the minimum objective function value are, \change in general\stopchange, of vital importance to combinatorial optimisation algorithms.
\change Algorithm \ref{alg:ip} benefits directly from the bound $\underline{t}\leq\tau(G,S)$ by omitting calculations of $\nu(t)$ for $t<\underline{t}$. \stopchange
In this section, we study three types of lower bounds on the broadcast time $\tau(G,S)$.

\subsection{Analytical lower bounds} \label{sec:lbanalyt}
Any solution $\left(V_0,\ldots,V_t,\pi\right)$ satisfying conditions \ref{def:boundary}--\ref{def:unique} of Definition \ref{def:broadcasttime},
also satisfies $\left|V_{k+1}\right|\leq 2\left|V_k\right|$ for all $k\geq 0$.
\change Because the signal is passed along some path from $S$ to node $v\in V\setminus S$, and the length of the path is at least $sp_v$,
node $v$ becomes informed at no earlier time than $sp_v$
\citep[Theorem 1]{lima22}.
This yields the following lower bound:
\begin{observation}
\begin{equation}
\max\left\{\left\lceil\log\frac{n}{\sigma}\right\rceil, \max_{v\in V\setminus S}sp_v\right\}\leq \tau(G,S).
\label{eq:loglb}
\end{equation}
\label{obs:loglb}
\end{observation}
\stopchange

Consider the $m$-step Fibonacci numbers $\left\{f^{m}_k\right\}_{k=1,2,\ldots}$ \citep{noe05}, a generalisation of the well-known (2-step) Fibonacci numbers, defined by
$f^{m}_k=0$ for $k\leq 0$, $f^{m}_1=1$, and 
other terms according to the linear recurrence relation 
\begin{align*}
f^{m}_k &=\sum\limits_{j=1}^m f^{m}_{k-j}, &\text{ for } k\geq 2.
\end{align*}
\change

\begin{observation} \label{obs:fib}
$f^{m}_k=2^{k-2}$ for $k=2,\ldots,m+1$.
\end{observation}
\stopchange

The generalised Fibonacci numbers are instrumental in the derivation of a lower bound on $\tau(G,S)$,
depending on the maximum node degree $d=\max\left\{\deg_G(v): v\in V\right\}$ in $G$.
The idea behind the bound is that the broadcast time can be no shorter than what is achieved if
the following ideal, but not necessarily feasible, criteria are met:
Every source transmits the signal to a neighbour node \change at every time \stopchange $1,\ldots,d$,
and every node $u\in V\setminus S$
transmits the signal to a neighbour node in each of the first $d-1$ periods following the \change time \stopchange when $u$ gets informed.
An exception possibly occurs in the last period, as there may be fewer nodes left to be informed than there are nodes available to inform them.

\begin{proposition}
\begin{equation*}
\label{lem:lbreg1}
	\tau(G,S)\geq\min\left\{t:2\sigma\sum\limits_{j=1}^tf^{d-1}_j\geq n\right\}.
\end{equation*}
\label{prop:lbfib}
\end{proposition}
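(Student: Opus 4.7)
My plan is to prove the contrapositive bound $|V_t| \leq 2\sigma \sum_{j=1}^t f^{d-1}_j$ for every $t \geq 1$ and every feasible $(V_0,\ldots,V_t,\pi)$ satisfying Definition \ref{def:broadcasttime}. Setting $V_t = V$ then forces $n \leq 2\sigma \sum_{j=1}^t f^{d-1}_j$ whenever $\tau(G,S) \leq t$, which is exactly the proposition.

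First I would introduce the ``ideal'' auxiliary sequence with $T_1 = \sigma$ and
\[ T_k = \sigma\,\delta_{k\leq d} + \sum_{j=\max\{1,k-d+1\}}^{k-1} T_j \quad (k \geq 2), \]
which counts the transmissions in the ideal scenario described just before the proposition: every source is active in every period up to $d$, and every non-source is active in each of the $d-1$ periods following the one in which it is informed. Comparing this recurrence with the defining relation $f^{d-1}_k = \sum_{i=1}^{d-1} f^{d-1}_{k-i}$ (with the convention $f^{d-1}_i = 0$ for $i \leq 0$), a short induction on $k$ gives $T_k = 2\sigma f^{d-1}_k$ for all $k \geq 2$. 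Combined with $T_1 = \sigma$ and $f^{d-1}_1 = 1$, this yields the identity $\sigma + \sum_{k=1}^t T_k = 2\sigma \sum_{j=1}^t f^{d-1}_j$, so the remaining task is to show $|V_t| \leq \sigma + \sum_{k=1}^t T_k$ in every feasible schedule.

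I would prove this cumulative bound by induction on $t$. The base case $t = 1$ is immediate, because only sources can transmit at the first step, so $|V_1| \leq 2\sigma = \sigma + T_1$. For the inductive step, set $a_j = |V_j \setminus V_{j-1}|$ and $w_j = \min(d-1,t-j)$. A global count of transmissions in $[1,t]$, using that each source contributes at most $\min(d,t)$ and each non-source informed at step $j$ contributes at most $w_j$ (capacity $d-1$, only the $t-j$ slots $j+1,\ldots,t$ available), gives
\[ |V_t| - \sigma \;\leq\; \sigma \min(d,t) + \sum_{j=1}^{t-1} a_j w_j. \]
Since $w_j$ is non-increasing in $j$, Abel summation expresses $\sum a_j w_j$ as a non-negative linear combination of $|V_1|,\ldots,|V_{t-1}|$ plus a fixed contribution from $|V_0| = \sigma$. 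The inductive hypothesis then replaces each $|V_j|$ by $\sigma + \sum_{k=1}^j T_k$, and undoing the Abel step recovers $\sigma \min(d,t) + \sum_{j=1}^{t-1} T_j w_j$, which equals $\sum_{k=1}^t T_k$ by the same transmission-counting identity applied to the ideal sequence itself. This closes the induction.

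The subtle point is that the per-step inequality $a_k \leq T_k$ is \emph{not} generally valid: a schedule that defers transmissions from a source can make $a_k$ exceed $T_k$ at late steps $k > d$. This is why the argument must be phrased in terms of the cumulative quantities $|V_j|$ and relies on the monotonicity of the weights $w_j$ (hence the Abel-summation step), rather than on a more direct per-step comparison.
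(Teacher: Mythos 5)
Your proof is correct, but it takes a genuinely different route from the paper's. The paper argues in the ``constructive'' direction: it exhibits an idealized relaxed schedule (dropping condition \ref{def:edge} of Definition \ref{def:broadcasttime}) in which every source transmits in each of the first $\min\{d,t-1\}$ periods and every other node in each of the $d-1$ periods after it is informed, computes the layer sizes $\left|L_k\right|=2\sigma f^{d-1}_k$ of that particular schedule, and then asserts that no feasible schedule can outpace it, so $\tau(G,S)\geq t$. That domination step is only sketched in the paper (and is partly delegated to the \textsc{Node Degree Relaxation} and Lemma \ref{lemma:degorder} later in Section \ref{sec:lbcombrel}). You instead prove the domination outright, in contrapositive form: the bound $\left|V_t\right|\leq 2\sigma\sum_{j=1}^t f^{d-1}_j$ for \emph{every} feasible $\left(V_0,\ldots,V_t,\pi\right)$, obtained from the per-node transmission count $\left|V_t\right|-\sigma\leq\sigma\min(d,t)+\sum_{j=1}^{t-1}a_jw_j$ with non-increasing weights $w_j=\min(d-1,t-j)$, followed by induction on $t$ and an Abel-summation step that converts the cumulative hypothesis $\left|V_j\right|\leq\sigma+\sum_{k=1}^{j}T_k$ into the same bound at $t$. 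I verified the pieces: the recurrence gives $T_k=2\sigma f^{d-1}_k$ for $k\geq 2$ (the three regimes $k\leq d$, $k=d+1$, $k>d+1$ all match the convention $f^{d-1}_j=0$ for $j\leq 0$), the identity $\sum_{k=1}^{t}T_k=\sigma\min(d,t)+\sum_{j=1}^{t-1}T_jw_j$ is exactly the transmitter-side count for the ideal sequence, and the coefficients $w_{t-1}$ and $w_j-w_{j+1}$ produced by the Abel step are non-negative, so the inductive hypothesis can be applied coordinate-wise. Your closing remark that a per-step comparison $a_k\leq T_k$ would be unsafe, because a feasible schedule may defer transmissions to periods outside the window $\{j+1,\ldots,j+d-1\}$, is precisely the right reason to phrase the argument cumulatively. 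What your approach buys is a fully rigorous, self-contained proof that no feasible schedule beats the Fibonacci growth, closing the gap left by the paper's informal ``the ideal schedule is fastest'' claim; what the paper's approach buys is brevity and a direct connection to the relaxation framework it develops immediately afterwards.
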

\begin{proof}

Consider a solution $\left(V_0,\ldots,V_t,\pi\right)$ with associated broadcast graph $D$, such that $V_0\subsetneq V_1\subsetneq\cdots\subsetneq V_{t-1}\subsetneq V_t$, 
\begin{itemize}
  \item conditions \ref{def:boundary} and \ref{def:parent}--\ref{def:unique} of Definition \ref{def:broadcasttime} are satisfied,
  \item for each source $u\in S$ and each $j=1,\ldots,\min\{d,t-1\}$, there exists a node $v\in V_j\setminus V_{j-1}$ such that $\pi(v)=u$, and
  \item for each $k\in\{1,\ldots,t-2\}$, each node $u\in V_k\setminus V_{k-1}$, and each $j=k+1,\ldots,\min\{k+d-1,t-1\}$,
        there exists a node $v\in V_j\setminus V_{j-1}$ such that $\pi(v)=u$.
\end{itemize}
\noindent
That is, all sources send the signal to some uninformed node (not necessarily a neighbour node) \change at all times \stopchange up to $\min\{d,t-1\}$.
All nodes that received the signal \change at time \stopchange $k$, forward it to some uninformed node \change at all times \stopchange up to $\min\{d-1,t-1\}$,
and all nodes are informed \change at time \stopchange $t$.
Because condition \ref{def:edge} of Definition \ref{def:broadcasttime}, stating that the flow of information follows $E$, is not imposed,
such a solution $\left(V_0,\ldots,V_t,\pi\right)$ exists for an appropriate choice of $t$.
Since the solution implies that every node is actively receiving or sending for up to $d$ consecutive periods, until the signal is broadcast \change at time \stopchange $t$,
it follows that $\tau(G,S)\geq t$.
It remains to prove that the chosen $t$ is the smallest value satisfying $2\sigma\sum_{k=1}^tf_k^{d-1}\geq n$, i.e.,
that $2\sigma\sum_{k=1}^{t-1}f_k^{d-1}<n\leq 2\sigma\sum_{k=1}^tf_k^{d-1}$.

For $k=1,\ldots,t$, let $L_k=\left\{v\in V_k:\deg_{D_k}(v)=1\right\}$ denote the set of nodes with exactly one out- or in-neighbour in $D_k$,
and let $L_k=\emptyset$ for $k\leq 0$.
That is, for $k>1$, $L_k$ is the set of nodes that receive the signal \change at time \stopchange $k$,
whereas $L_1$ consists of all nodes informed \change at time \stopchange 1, including the sources $S$.
Hence, $L_1,\ldots,L_{t-1}$ are disjoint sets (but $L_t$ may intersect $L_{t-1}$), and $V_k=L_1\cup\cdots\cup L_k$ for all $k=1,\ldots,t$.

Consider a \change time \stopchange $k\in\{2,\ldots,t-1\}$.
The assumptions on $\left(V_0,\ldots,V_t,\pi\right)$ imply that $\pi$ is a bijection from $L_k$ to $L_{k-1}\cup\cdots\cup L_{k-d+1}$.
Thus, $\left|L_k\right|=\sum_{j=1}^{d-1}\left|L_{k-j}\right|$.
Since also $\left|L_1\right|=2\sigma=2\sigma f_1^{d-1}$ and $\left|L_j\right|=f_j^{d-1}=0$ for $j\leq 0$,
we get $\left|L_k\right|=2\sigma f_k^{d-1}$.
Further, $\left|L_t\right|\leq\sum_{j=1}^{d-1}\left|L_{t-j}\right|=2\sigma f_t^{d-1}$.
It follows that $2\sigma\sum_{k=1}^{t-1}f_k^{d-1}=\sum_{k=1}^{t-1}\left|L_k\right|=\left|V_{t-1}\right|<n=\left|V_t\right|\leq\sum_{k=1}^t\left|L_k\right|\leq 2\sigma\sum_{k=1}^tf_k^{d-1}$,
which completes the proof.
\end{proof}

\subsection{Combinatorial relaxations} \label{sec:lbcombrel}

Lower bounds on the broadcast time $\tau(G,S)$ are obtained by replacing one or more of the conditions imposed in Definition \ref{def:broadcasttime}
by more lenient conditions.
Because condition \ref{def:edge} states that source $s\in S$ is the parent of $v$ only if $\{s,v\}\in E$,
the condition implies that $s$ has no more than $\deg_G(s)$ child nodes.
Analogously, for any $u\in V\setminus S$, the condition implies that $u$ has at most $\deg_G(u)-1$ child nodes. 
As the implications do not apply in the reverse direction, a relaxation is obtained if
condition \ref{def:edge} is replaced by
\begin{enumerate}
\setcounter{enumi}{4}
  \item for all $v\in V$, $\left|\pi^{-1}(v)\right|\leq\deg_G(v)-\delta_{v\in V\setminus S}$. \label{def:degree}
\end{enumerate}

\noindent
A lower bound on $\tau(G,S)$ is then given by the solution to:
\begin{problem}[\textsc{Node Degree Relaxation}]\label{prob:degree}
Find the smallest integer $t\geq 0$ for which there exist
a sequence $V_0\subseteq\dots\subseteq V_t$ of node sets and a function $\pi:V\setminus S\to V$,
satisfying conditions \ref{def:boundary} and \ref{def:parent}--\ref{def:degree}.
\end{problem}

Observe that the bound given in Proposition \ref{prop:lbfib} is obtained by exploiting the lower-bounding capabilities of the \textsc{Node Degree Relaxation}.
By considering the degree of all nodes $v\in V$, rather than just the maximum degree, stronger bounds may be achieved in instances where $G$ is not regular
($\min_{v\in V}\deg_G(v)<\max_{v\in V}\deg_G(v)$).

Denote the source nodes $S=\left\{v_1,\dots,v_{\sigma}\right\}$ and the non-source nodes $V\setminus S=\left\{v_{\sigma+1},\ldots,v_n\right\}$,
where $\deg_G(v_{\sigma+1})\geq\deg_G(v_{\sigma+2})\geq\dots\geq\deg_G(v_n)$,
and let $d_i=\deg_G(v_i)$ ($i=1,\ldots,n$).
Thus, $\left\{d_1,\ldots,d_n\right\}$ resembles the conventional definition of a non-increasing degree sequence of $G$,
with the difference that only the subsequence consisting of the final $n-\sigma$ degrees is required to be non-increasing.

For a given $t\in\mathbb{Z}_+$, consider the problem of finding $\left(V_0,\ldots,V_t,\pi\right)$ such that $V_0=S$,
conditions \ref{def:parent}--\ref{def:degree} are satisfied, and $\left|V_t\right|$ is maximised.
The smallest value of $t$ for which the maximum equals $n$ is obviously the solution to Problem~\ref{prob:degree}.

The algorithm for Problem~\ref{prob:degree}, to follow later in the section, utilises that the maximum value of $\left|V_t\right|$
is achieved by transmitting the signal to nodes in non-increasing order of their degrees.
Observe that, contrary to the case of Problem~\ref{prob:min}, transmissions to non-neighbours are allowed in the relaxed problem.
Any instance of Problem~\ref{prob:degree} thus has an optimal solution where, for $k=1,\ldots,t-1$,
$u\in V_k\setminus V_{k-1}$ and $v\in V_{k+1}\setminus V_k$ implies $\deg_G(u)\geq\deg_G(v)$.

A rigorous proof of this follows next.

\begin{lemma}
\label{lemma:degorder}
The maximum value of $\left|V_t\right|$ over all $\left(V_0,\ldots,V_t,\pi\right)$ satisfying $V_0=S$ and
conditions \ref{def:parent}--\ref{def:degree}, is attained by some
$\left(V_0,\ldots,V_t,\pi\right)$ where
$\min\left\{i: v_i\in V_{k}\setminus V_{k-1}\right\}>\max\left\{i: v_i\in V_{k-1}\right\}$ ($k=1,\ldots,t$).
\end{lemma}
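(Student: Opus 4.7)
The plan is to prove the lemma by an exchange argument with a monovariant. Take any $(V_0,\ldots,V_t,\pi)$ satisfying $V_0=S$ and conditions \ref{def:parent}--\ref{def:degree} that attains the maximum value of $|V_t|$, and define $\Phi = \sum_{\ell=1}^{t}\sum_{v_h\in V_\ell} h$. Suppose the ordering property fails; pick the smallest step $k$ at which this occurs, choose $v_i \in V_k \setminus V_{k-1}$ and $v_j \in V_{k-1}$ with $i<j$, and let $k'=\min\{\ell: v_j \in V_\ell\}<k$. Since $V_0=S=\{v_1,\ldots,v_\sigma\}$ and $v_i \in V_t \setminus V_0$, both $i,j>\sigma$, so both nodes are non-source, and the non-increasing ordering of non-source degrees yields $\deg_G(v_i) \geq \deg_G(v_j)$.

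I construct a new solution $(V_0',\ldots,V_t',\pi')$ by exchanging the informed-times of $v_i$ and $v_j$: set $V_\ell' = (V_\ell \setminus \{v_j\}) \cup \{v_i\}$ for $k' \leq \ell < k$, and $V_\ell' = V_\ell$ otherwise. Define $\pi'$ by letting $v_i$ inherit $v_j$'s former parent ($\pi'(v_i)=\pi(v_j)$), setting $\pi'(v_j)=\pi(v_i)$ when $\pi(v_i)\neq v_j$ and $\pi'(v_j)=v_i$ in the degenerate case $\pi(v_i)=v_j$, redirecting every child $u$ of $v_j$ with $u \neq v_i$ and informed-time in $(k',k]$ to have parent $v_i$, and leaving all remaining parent assignments unchanged. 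Conditions \ref{def:parent} and \ref{def:unique} then follow from direct inspection, exploiting that the redirected children have pairwise distinct informed-times.

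The main obstacle is condition \ref{def:degree}: the redirections can push $|\pi'^{-1}(v_i)|$ beyond $\deg_G(v_i)-1$. To remedy this I transfer a suitable number $s$ of $v_i$'s original children (all of which have informed-times $>k$, so they may be parented by $v_j$ after its move to step $k$) from $v_i$ back to $v_j$. Existence of a valid $s$ follows from the combined-capacity bound $|\pi^{-1}(v_i)|+|\pi^{-1}(v_j)| \leq (\deg_G(v_i)-1)+(\deg_G(v_j)-1)$. The transfer also respects per-step injectivity because $v_j$'s retained children occupy at most a controlled number of the steps $\{k+1,\ldots,t\}$, so enough of $v_i$'s child-steps remain free to accommodate the transfer, thanks to $\deg_G(v_j)\leq\deg_G(v_i)$.

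After the operation, $|V_t'|=|V_t|$ (no node is added to or removed from $V_t$), so optimality is preserved. A direct computation gives $\Phi'-\Phi=(i-j)(k-k')<0$, so $\Phi$ strictly decreases; since $\Phi$ is a non-negative integer, finitely many swaps yield an optimum satisfying the desired ordering property. The principal difficulty throughout is the joint verification of conditions \ref{def:unique} and \ref{def:degree} under the combined swap-and-transfer operation, which reduces to the counting argument sketched above.
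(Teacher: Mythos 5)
Your proposal is correct and follows essentially the same route as the paper: both proofs swap the informed-times of a misordered pair $v_i,v_j$ (where $i<j$ and the degree ordering of non-sources gives $\deg_G(v_i)\geq\deg_G(v_j)$) and then repair the parent function, the crux in both cases being a counting argument that redistributes children between $v_i$ and $v_j$ so that conditions \ref{def:unique} and \ref{def:degree} survive. The differences are only in bookkeeping --- the paper swaps the two child sets wholesale and retains an excess of $m=\max\left\{0,\left|\pi^{-1}(v_i)\right|-\left|\pi^{-1}(v_j)\right|\right\}$ children with $v_i$, whereas you move only $v_j$'s early children to $v_i$ and transfer a surplus $s$ back --- plus your explicit monovariant $\Phi$, which makes rigorous the termination of repeated swaps that the paper leaves implicit.
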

\begin{proof}

Consider an arbitrary optimal solution $\left(V_0,\ldots,V_t,\pi\right)$,
and assume that $v_i\in V_p\setminus V_{p-1}$, $v_j\in V_{q}\setminus V_{q-1}$, $i<j$, and $1\leq q<p\leq t$.
We prove that the solution 
obtained by swapping nodes $v_i$ and $v_j$ is also optimal.
Let $\bar{V}_k=V_k$ for $k=0,\ldots,q-1, p, p+1,\ldots,t$, and $\bar{V}_k=\left(V_k\setminus\{v_j\}\right)\cup\{v_i\}$ for $k=q,\ldots,p-1$.
Because $\left|\bar{V}_t\right|=\left|V_t\right|$, we only need to show that $\left(\bar{V}_0,\ldots,\bar{V}_t,\bar{\pi}\right)$ is feasible for some $\bar{\pi}$.
In the following, we demonstrate that a valid parent function $\bar{\pi}$ can be obtained by swapping $\pi(v_i)$ and $\pi(v_j)$,
along with a simple adjustment ensuring that $\left|\bar{\pi}^{-1}(v_j)\right|\leq\left|\pi^{-1}(v_j)\right|$.

Define $m=\max\left\{0,\left|\pi^{-1}(v_i)\right|-\left|\pi^{-1}(v_j)\right|\right\}$.
Consider the case where $m>0$.
Because $v_i$ has at most one child in each $V_k\setminus V_{k-1}$ ($k=p+1,\ldots,t$),
there exist integers $p_1>\cdots>p_m>p$, and nodes $u_r\in V_{p_r}\setminus V_{p_r-1}$ ($r=1,\ldots,m$) such that $\pi(u_r)=v_i$, whereas $v_j$ has no child in
$\bigcup_{r=1}^m\left(V_{p_r}\setminus V_{p_r-1}\right)$.
Let $U=\{u_1,\ldots,u_m\}$, and let $U=\emptyset$ if $m=0$.

Let $\bar{\pi}(v)=v_i$ for all $v\in U$, and $\bar{\pi}(v)=v_j$ for all $v\in \pi^{-1}(v_i)\setminus U$.
Also, let $\bar{\pi}(v)=v_i$ for all $v\in \pi^{-1}(v_j)\setminus\{v_i\}$.
If $\pi(v_i)=v_j$, let $\bar{\pi}(v_j)=v_i$, otherwise let $\bar{\pi}(v_j)=\pi(v_i)$.
Let $\bar{\pi}(v_i)=\pi(v_j)$.
For all other non-source nodes, that is, all $v\in V\setminus S$ for which $v_i\neq\pi(v)\neq v_j$, let $\bar{\pi}(v)=\pi(v)$.

If $m>0$, $\left|\bar{\pi}^{-1}(v_i)\right|=\left|\pi^{-1}(v_i)\right|\leq\deg_G(v_i)-1$
and $\left|\bar{\pi}^{-1}(v_j)\right|=\left|\pi^{-1}(v_j)\right|\leq\deg_G(v_j)-1$.
Otherwise, $\left|\bar{\pi}^{-1}(v_i)\right|=\left|\pi^{-1}(v_j)\right|\leq\deg_G(v_j)-1\leq\deg_G(v_i)-1$,
and $\left|\bar{\pi}^{-1}(v_j)\right|=\left|\pi^{-1}(v_i)\right|\leq\left|\pi^{-1}(v_j)\right|\leq\deg_G(v_j)-1$.
For $v_i\neq v\neq v_j$, $\left|\bar{\pi}^{-1}(v)\right|=\left|\pi^{-1}(v)\right|$,
and thus $\left(\bar{V}_0,\ldots,\bar{V}_t,\bar{\pi}\right)$ satisfies condition \ref{def:degree}.
It is straightforward to show that $\left(\bar{V}_0,\ldots,\bar{V}_t,\bar{\pi}\right)$ also satisfies conditions \ref{def:parent}--\ref{def:unique}.
\end{proof}

Algorithm \ref{alg:dreg} takes as input the number $\sigma$ of sources and the number $n$ of nodes, along with the node degrees $d_1,\ldots,d_n$,
where $d_{\sigma+1}\geq\cdots\geq d_n$.
It operates with \change counters $\nu_t$ \stopchange of informed nodes \change at time $t$\stopchange, initiated to \change $\nu_0=\sigma$\stopchange.
Thus, nodes $v_1,\ldots,v_{\nu_t}$ are informed \change at time $t$\stopchange, whereas $\change v_{\nu_t+1}\stopchange,\ldots,v_n$ are not.
A counter denoted $a_i$ ($i=1,\ldots,n$) keeps track of the number of nodes informed by node $v_i$.
The sets $F_{\change t\stopchange}$ consists of indices $i$ of informed nodes that \change at time $t$ \stopchange have not sent the signal to $d_i-1$ nodes ($d_i$ nodes if $i\leq\sigma$). 
In each iteration of the outer loop of the algorithm, all nodes $v_i$ for which $i\in F_{\change t\stopchange}$ inform some currently uninformed node,
and all counters are updated accordingly.
The process stops when all $n$ nodes are informed, and the number of performed iterations is returned.

\begin{algorithm}
\KwData{$\sigma,n,d_1,\ldots,d_n\in\mathbb{Z}_+$}
$a_1\leftarrow\cdots\leftarrow a_n\leftarrow 0$, $\nu_{\change 0\stopchange}\leftarrow\sigma$ \\
\For{$t=1,2,\dots$} {
	$F_{\change t\stopchange}\leftarrow\left\{i=1,\ldots,\nu_{\change t-1\stopchange}: a_i<d_i-\delta_{i>\sigma}\right\}$\\
	\change $\nu_{\change t\stopchange}\leftarrow\nu_{\change t-1\stopchange}+|F_t|$ \stopchange \\
	\textbf{if} ~$\nu_{\change t\stopchange}\geq n$ ~\textbf{return} $t$ \\
	\textbf{for} ~$i\in\change F_t\stopchange$ ~\textbf{do} ~$a_i\leftarrow a_i+1$ \\
}
\caption{Lower bound exploiting the degree distribution}
\label{alg:dreg}
\end{algorithm}

\begin{proposition}
Algorithm \ref{alg:dreg} returns a lower bound on $\tau(G,S)$.
\label{cor:deg}
\end{proposition}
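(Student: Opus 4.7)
The plan is to show $t^{\mathrm{alg}} \leq \tau(G,S)$, where $t^{\mathrm{alg}}$ is the value returned by Alg.~\ref{alg:dreg}. Since any optimal schedule for Problem~\ref{prob:min} also satisfies the conditions of the \textsc{Node Degree Relaxation} (Problem~\ref{prob:degree}), it suffices to prove $t^{\mathrm{alg}} \leq t$ for every $(V_0,\ldots,V_t,\pi)$ with $V_0 = S$, $V_t = V$ that satisfies conditions \ref{def:parent}--\ref{def:degree}. By Lemma~\ref{lemma:degorder}, I may restrict to schedules in which $V_k = \{v_1,\ldots,v_{\nu_k}\}$ for some non-decreasing sequence $\nu_0 = \sigma, \nu_1, \ldots, \nu_t = n$.

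Let $N(k)$ denote the counter $\nu$ after iteration $k$ of the algorithm, so that $t^{\mathrm{alg}} = \min\{t: N(t) \geq n\}$. I would prove by strong induction on $k$ that $\nu_k \leq N(k)$; applied at $k = t$ this gives $n = \nu_t \leq N(t)$, hence $t \geq t^{\mathrm{alg}}$. The key ingredient is a pointwise comparison $a_i^k \geq b_i^k$, where $a_i^k$ and $b_i^k$ count the messages sent by $v_i$ up to step $k$ in the algorithm and in the feasible schedule, respectively. Writing $c_i = d_i - \delta_{i>\sigma}$, one has $a_i^k = \min(\max(0, k - \tilde{k}_i), c_i)$, where $\tilde{k}_i$ is the step at which $v_i$ is informed in the algorithm, and the bound $b_i^k \leq \min(\max(0, k - k_i^\ast), c_i)$, where $k_i^\ast$ is the analogous step in the feasible schedule (both following from condition \ref{def:unique} and condition \ref{def:degree}).

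The inductive step splits into two cases for each $i$. If $k_i^\ast \geq k$, then $b_i^k = 0 \leq a_i^k$ trivially. If $k_i^\ast < k$, the inductive hypothesis applied at step $k_i^\ast$ yields $\nu_{k_i^\ast} \leq N(k_i^\ast)$, and since $i \leq \nu_{k_i^\ast}$ by the ordering assumption, also $i \leq N(k_i^\ast)$, i.e., $\tilde{k}_i \leq k_i^\ast$; consequently $a_i^k = \min(k - \tilde{k}_i, c_i) \geq \min(k - k_i^\ast, c_i) \geq b_i^k$. Summing the inequality $a_i^k \geq b_i^k$ over all $i$, and using that each non-source node receives exactly one message, gives $N(k) - \sigma = \sum_i a_i^k \geq \sum_i b_i^k = \nu_k - \sigma$, closing the induction.

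The main obstacle anticipated here is the potential circularity between the informing-time inequality $\tilde{k}_i \leq k_i^\ast$ and the cardinality inequality $\nu_k \leq N(k)$, since a direct induction on $i$ seems to require $\nu_k \leq N(k)$ at the very steps that are still to be bounded. The case split above avoids the circularity by invoking the inductive hypothesis only at strictly earlier steps ($k_i^\ast < k$), and by disposing of the boundary case $k_i^\ast \geq k$ through the trivial bound $b_i^k = 0$; Lemma~\ref{lemma:degorder} is what makes this case split possible, as it lets one compare the two schedules index by index.
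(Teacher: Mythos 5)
Your argument is correct and follows the same route as the paper: the paper's own proof is a one-line reference to Lemma~\ref{lemma:degorder} and the surrounding discussion, which is precisely the reduction you carry out (drop the edge condition to pass to the \textsc{Node Degree Relaxation}, use the lemma to sort the schedule by index, and compare with the algorithm's greedy schedule). Your explicit induction $\nu_k\leq N(k)$ via the per-node comparison $a_i^k\geq b_i^k$ supplies rigorously the ``greedy dominates any sorted feasible schedule'' step that the paper leaves informal; the only detail worth adding is that the contiguity $V_k=\{v_1,\ldots,v_{\nu_k}\}$ is not the literal statement of Lemma~\ref{lemma:degorder} but follows from its index-separation property combined with $V_t=V$.
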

\begin{proof}
		Follows from Lemma \ref{lemma:degorder} and the subsequent discussion. 
\end{proof}
\change

It is next proved that the lower bound produced by Alg.\ \ref{alg:dreg}, henceforth denoted $\eta$, is no weaker than the Fibonacci bound (Proposition \ref{prop:lbfib}) and the logarithmic bound.

\begin{proposition}\label{prop:strength}
$\eta\geq\max\left\{\min\left\{t:2\sigma\sum\limits_{j=1}^tf^{d-1}_j\geq n\right\},\left\lceil\log\frac{n}{\sigma}\right\rceil\right\}$.
\end{proposition}
\begin{proof}
That $\eta\geq\left\lceil\log\frac{n}{\sigma}\right\rceil$ follows immediately from $\nu_t=\nu_{t-1}+|F_t|\leq 2\nu_{t-1}$.
Because $\eta\geq\eta'$, where $\eta'$ is the output from Alg.\ \ref{alg:dreg} when the input data is $(\sigma,n,d,\ldots,d)$ (recall that $d=\max\left\{d_i: i=1,\ldots,n\right\}$),
it suffices to prove that $\eta'=\min\left\{t:2\sigma\sum\limits_{j=1}^tf^{d-1}_j\geq n\right\}$.
To that end, assume $d_1=\cdots=d_n=d$.
Then, $\left|F_t\right|=2^{t-1}\sigma$ for $t=1,\ldots,d$, and by Observation \ref{obs:fib}, $\left|F_t\right|=2\sigma f_t^{d-1}$ for $t=2,\ldots,d$.
For $t>d$, $\left|F_t\right|=\sum_{j=1}^{d-1}\left|F_{t-j}\right|$, which shows that $\left|F_t\right|$ is given by the recurrence formula of the $(d-1)$-step Fibonacci sequence.
Hence, $\left|F_t\right|=2\sigma f_t^{d-1}$ for all $t\geq 2$.
Since also $\nu_0+\left|F_1\right|=2\sigma=2\sigma f_1^{d-1}$, we get $\nu_t=\nu_0+\sum_{j=1}^t\left|F_j\right|=2\sigma\sum_{j=1}^tf_j^{d-1}$.
It follows that $\eta$ is the smallest value of $t$ for which $2\sigma\sum_{j=1}^tf_j^{d-1}\geq n$, which completes the proof.
\end{proof}
\stopchange

\section{Upper bounds} \label{sec:ub}

Access to an upper bound $\bar{t}\geq\tau(G,S)$ affects the number of variables in the models studied in Sections \ref{sec:optbasic}--\ref{sec:decbasic}.
Algorithms that output feasible, or even near-optimal solutions, are instrumental in the computation of upper bounds.
Further, such methods are required in sufficiently large instances, where exact approaches fail to terminate within a practical time.

\subsection{Existing heuristic methods} \label{sec:heur}
Building on earlier works \citep{harutyunyan06, harutyunyan10},
\citet{harutyunyan14} study a heuristic (considering $\sigma=1$) departing from a shortest-path tree of $G$.
A sequence of local improvements is performed in the bottom-up direction in the tree, starting by the leafs and terminating at the root node.
Rearrangements of the parent assignments are made in order to reduce the broadcast time needed in subtrees.
The heuristic has running time $\mathcal{O}\left(|E|\log{n}\right)$.
\change

Alternative heuristic methods have been studied by \citet{desousa18,desousa18b}.
\citet{hasson04} further suggest a metaheuristic belonging to the ant colony paradigm.
More recently, \citet{lima22} report comprehensive numerical experiments with a random-key genetic algorithm,
and provide empirical evidence of competitive computational performance of their method.
\stopchange

\subsection{A construction method} \label{sec:cons}

Consider an integer $t'\geq 0$, node sets $S=V_0\subseteq V_1\subseteq\cdots\subseteq V_{t'}\neq V$ and a function $\pi: V\setminus S\to V$,
where $\left\{\pi(v),v\right\}\in E$ for all $v\in V_{t'}\setminus S$,
and conditions \ref{def:parent}--\ref{def:unique} of Definition \ref{def:broadcasttime} are satisfied for $t=t'$.
That is, $\left(V_0,\ldots,V_{t'},\pi\right)$ defines a broadcast forest corresponding to the instance $\left(G\left[V_{t'}\right],S\right)$,
but the forest does not cover $V$.
In particular, if $t'=0$, the broadcast forest is a null graph on $S$, while it is a matching from $S$ to $V_1\setminus S$ if $t'=1$.

This section addresses the problem of extending the partial solution $\left(V_0,\ldots,V_{t'},\pi\right)$ by another node set $V_{t'+1}$,
such that the conditions above also are met for $t=t'+1$.
With $t'=0$ as departure point, a sequence of extensions results in a broadcast forest corresponding to instance $(G,S)$.
Each extension identifies a matching from $V_{t'}$ to $V\setminus V_{t'}$, and all matched nodes in the latter set are included in $V_{t'+1}$.
A key issue is how to determine the matching.

Since the goal is to minimise the time (number of extensions) needed to cover $V$, a \emph{maximum cardinality} matching between
$V_{t'}$ and $V\setminus V_{t'}$ is a natural choice.
Lack of consideration of the matched nodes' capabilities to inform other nodes is however an unfavourable property.
Each iteration of Alg.\ \ref{alg:match} rather sees $\change\kappa\stopchange\geq 1$ time periods ahead, and maximises the total number of nodes in $V\setminus V_{t'}$
that can be informed \change at time \stopchange $t'+1,\ldots,t'+\change\kappa\stopchange$.
Commitment is made for only one period, and the matched nodes are those that are informed \change at time \stopchange $t'+1$ from some node in $V_{t'}$.
The maximisation problem in question is exactly the one addressed by model \eqref{mod:basic:dec},
where $V_{t'}$ is considered as sources, $\change\kappa\stopchange$ the upper bound on the broadcast time, and the graph is $G$ with all edges within $V_{t'}$ removed.
Choosing $\change\kappa\stopchange=1$ corresponds to the maximum \change cardinality \stopchange matching option.

\begin{remark} \label{rem:exact}
Algorithm \ref{alg:match} is developed into an \emph{exact method} by choosing $\change\kappa\stopchange$ at least as large as any available upper bound on $\tau(G,S)$.
If the algorithm returns a value $\ell\leq\change\kappa\stopchange$, it follows that $\tau(G,S)=\ell$.
\end{remark}

\begin{algorithm}[]
	\KwData{$G=(V,E), S\subseteq V, \change\kappa\stopchange\in \{1,\dots,n-\sigma\}$}
	$S'\leftarrow S$,
	$F\leftarrow\emptyset$\\
\While{$S'\neq V$} {
	$x\leftarrow$ an optimal solution to the instance $(G,S',\change\kappa\stopchange)$ of \change the ILP \eqref{mod:basic:dec} \stopchange \\
	\For{$\{u,v\}\in E$ such that $u\in S'$, $v\in V\setminus S'$, and $x_{uv}^1=1$} {
		$S'\leftarrow S'\cup\{v\}$,
		$F\leftarrow F\cup\left\{\change (\stopchange u,v\change )\stopchange \right\}$\\
	}
}
	\textbf{return} $\tau((V,F),S)$ \\
\caption{Computing an upper bound on $\tau(G,S)$ through sequences of matchings}
\label{alg:match}
\end{algorithm}

Algorithm \ref{alg:match} generates a broadcast forest $(V,F)$ consisting of $|S|$ trees rooted at distinct sources.
The broadcast time $\tau\left((V,F),S\right)$ of the forest is thus an upper bound on $\tau(G,S)$.
\change

In many instances, \eqref{mod:basic:dec} has multiple optimal solutions.
Which of these is assigned to $x$ in line 3 may affect the bound eventually returned by Alg.~\ref{alg:match}.
Favourable tie breaking can be approached heuristically, e.g., by
\begin{itemize}
  \item discouraging $x_{uv}^1=1$ if $\gamma_u=\text{dist}_u+\text{child}_u$ is large, where $\text{dist}_u$ is the distance from $S$ to $u$ in the directed forest $(S',F)$,
        and $\text{child}_u$ is the number of child nodes of $u$ in the forest,
  \item and encouraging $x_{uv}^1=1$ if $\zeta_v=\left|N(v)\setminus S'\right|$ is large.
\end{itemize}
\noindent
Motivation for the former rule is found in the observation that the value $\tau\left((V,F),S\right)$ returned from Alg.\ \ref{alg:match} is no smaller than
$\max_{u\in V}\left(\text{dist}_u+\text{child}_u\right)$.
Moreover, including in $S'$ a node $v$ with a large neighbourhood in $V\setminus S'$ is preferable to including one for which $\left|N(v)\setminus S'\right|$ is small,
as such a choice implies a larger cut set between $S'$ and $V\setminus S'$.
The larger the cut set, the more edges there are for the algorithm to choose from in subsequent iterations.
Letting $c_{uv}^k=1-\gamma_u/|V|+\zeta_v/|E|$ if $k=1$, and $c_{uv}^k=1$ otherwise, and multiplying $x_{uv}^k$ by $c_{uv}^k$ in the objective function of model \eqref{mod:basic:dec}
yields the desired tie breaking.
It is readily verified that by the modest weight on $\gamma_u$ and $\zeta_v$, optimality is preserved for at least one optimal solution to \eqref{mod:basic:dec}.
\stopchange

\begin{remark} \label{rem:time}
If $\change\kappa\stopchange=1$, then the running time of Alg.\ \ref{alg:match} is $\mathcal{O}\left(n^{\frac{3}{2}}|E|\right)$, because the number of iterations is no more than $n$,
and the maximum cardinality matching is found in $\mathcal{O}\left(\sqrt{n}|E|\right)$ time \citep{hopcroft73}.
By applying the algorithm by \change \citet{proskurowski81} \stopchange for computing the broadcast time of a tree, $\tau\left((V,F),S\right)$ is computed in linear time.
For fixed $\change\kappa\stopchange\geq 2$, the problem solved in each iteration is NP-hard \citep{jansen95}, and the running time of Alg.\ \ref{alg:match} is exponential.
\end{remark}

\begin{remark} \label{rem:mvm}
\change
If $\change\kappa\stopchange=1$, the tie breaking rule in terms of a modified objective function indicated above implies that
maximum cardinality matching is replaced by maximum \emph{vertex-weight} matching (MVM).
\stopchange
The running time of Alg.\ \ref{alg:match} increases to $\mathcal{O}\left(n^2|E|\right)$, as MVM
is solved in $\mathcal{O}\left(n|E|\right)$ time \citep{dobrian19}.
An approximate MVM-solution within $\frac{2}{3}$ of optimality is found in $\mathcal{O}\left(|E|+n\log{n}\right)$ time \citep{dobrian19}.
\end{remark}

\section{Experimental Results} \label{sec:exp}
\change

Results from the following numerical experiments are reported in the current section:
\begin{enumerate}
  \item The lower bound $\max_{v\in V\setminus S}sp_v$ \citep{lima22} (see also Observation \ref{obs:loglb}) and the lower bound computed by Alg.\ \ref{alg:dreg}
        are compared. They are also compared with the upper bound found by the fast heuristic method of \citet{harutyunyan14}.
        The Fibonacci lower bound (Prop.\ \ref{prop:lbfib}) is not subject to experiments, since it is dominated by the bound produced by Alg.\ \ref{alg:dreg} (Prop.\ \ref{prop:strength}).
  \item The best lower bound, $lb$, and the upper bound, $ub$, are submitted to both of the ILP approaches
        (direct solution of the model \eqref{mod:basic} of \citet{desousa18,desousa18b} and Alg.\ \ref{alg:ip}, respectively)
        discussed in Section \ref{sec:exact}. A time limit of one hour is imposed on both.
        In the case of Alg.\ \ref{alg:ip}, which runs at most $ub-lb$ iterations, a time limit of $3600s/(ub-lb)$ applies in each iteration. 
        Hence, if the time limit is expired when $t=lb$, while $\nu(t)=n-\sigma$ is observed for $t=lb+1$, then the conclusion $lb\leq\tau(G,S)\leq lb+1$ is drawn.
        Ability to compute the minimum broadcast time, or a smallest possible interval containing it, is reported for both approaches.
  \item Results from the heuristic upper bounding method, Alg.\ \ref{alg:match}, are compared with those produced by the metaheuristic of \citet{lima22}.
        The latter heuristic is parameterised by a \emph{seed}, taking values between 0 and 20.
        One run, subject to a time limit of one minute for each seed value is made, and the best result is recorded.
        Correspondingly, a time limit of 21 minutes is imposed on Alg.\ \ref{alg:match}.
        The tie-breaking rule discussed in Section \ref{sec:cons} is applied.
\end{enumerate}
All experiments are run on a computer with an Intel(R) Core(TM) i5-7500 3.40GHz processor of four cores, each with a single thread.
The computer has 16 GByte RAM memory, and runs Linux (Ubuntu 20.04.5 LTS).
Algorithms \ref{alg:ip} and \ref{alg:match} are implemented in Python 3.10, and the ILP models \eqref{mod:basic} and \eqref{mod:basic:dec} are solved by the Gurobi 9.5.2 solver \citep{gurobi22},
and implemented through the Python interface.
The C++ implementation of the genetic algorithm of \citet{lima22} is downloaded from the authors' git repository.
Other code, that is the upper bounding algorithm of \citet{harutyunyan14} and the lower bounding methods (Observation \ref{obs:loglb}, Alg.\ \ref{alg:dreg}) are implemented in C++.
All C++ code is compiled by version 9.4.0 of the GNU C++ compiler.

\subsection{Instances} \label{sec:instances}
The experiments are run on a set of randomly generated instances, and on all instances studied by \citet{lima22}.
Unlike the latter reference, the current work includes experiments not only on single-source instances.
For each graph under study, a double-source instance is generated by drawing randomly two source nodes.
The graphs belong to \stopchange standard graph classes from the literature \citep[e.g.,][]{graham93},
briefly described in the following paragraphs.

\paragraph{Geometric graph\change s \stopchange on the unit sphere}
The python library \texttt{graph-tool}~\citep{peixoto14} is used to generate geometric graphs
with nodes embedded on the unit sphere in the three-dimensional Euclidean space. 
Two nodes are connected by an edge if the Euclidean distance between them is no larger than a given bound $r$.
The node coordinates are created by normalising three random numbers drawn from a Gaussian distribution.
For $r$ small, the number of connected components in the graph output from \texttt{graph-tool} is $\change m\stopchange>1$.
To ensure connectivity, $\change m\stopchange-1$ additional edges are added arbitrarily such that the resulting graph becomes connected.
 The result is a graph where:
 \begin{itemize}
	 \item If $r$ is sufficiently large, a grid is formed across the unit sphere. This mimics a satellite network, where the edges represent line-of-sight.
	 \item Otherwise, the arising graph is likely to contain local clusters resembling a satellite network.
	 \item For sufficiently small value of $r$, the clusters degenerate to single nodes, and the graph is a tree. 
 \end{itemize}

 \paragraph{Hypercube\change s\stopchange}\label{sect:hc}
 The hypercube graph $Q_d$ is the graph formed from the nodes and edges of a hypercube which is a $d$-dimensional generalisation of a circuit of length four ($d=2$) and a cube ($d=3$).
 Thus, $Q_d$ is a $d$-regular bipartite graph with $2^d$ nodes and $d2^{d-1}$ edges.
 %Note that Alg.~\ref{alg:match} for $k=1$ solves $Q_2$ and $Q_3$ optimally, but it is not guaranteed for $k\geq 4$, as seen in Fig.~\ref{fig:hccounterex}.
With a single source, the minimum broadcast time of the hypercube $Q_d=(V,E)$ is $\tau(Q_d,\{s\})=d$ for all $s\in V$.

 \paragraph{Cube-connected cycle\change s (for brevity, written `CC cycles' whenever convenient)\stopchange}
Consider a graph $G=(V,E)$ and an integer $d\geq 3$, where $|V|=d2^d$ and $E$ defined as follows:
Let the nodes be represented by distinct pairs $(x,y)$ of integers, where $0\leq x<2^d$ and $0\leq y<d$.
Node $(x,y)$ has exactly three neighbours, namely $(x, (y+1) \mod d)$, $(x, (y-1) \mod d)$,
and $(x \oplus 2y, y)$, where $\oplus$ denotes the exclusive or operation on the binary representation of integers.
Thus, $G$ is a cubic graph, referred to as a cube-connected cycle of order $d$ \citep{preparata81}.
It is distinguished from the hypercube $Q_d$ in that each node in $Q_d$ is replaced by a cycle on $d$ nodes, and the edge set is modified such that 3-regularity is obtained,
which in its turn implies $|E|=3d2^{d-1}$.

 \paragraph{Harary graph\change s\stopchange}
\citet{harary62} proves that for all integers $n>k\geq 1$, the minimum edge cardinality of a $k$-connected graph with $n$ nodes is $\lceil\frac{nk}{2}\rceil$.
% where $|V|=n$ and $|E|=m$, is $k=\lfloor\frac{2m}{n}\rfloor$.
The same reference provides a procedure that for arbitrary $k$ and $n$ constructs a graph $H_{kn}$, referred to as a Harary graph, at which the minimum is attained.
For instance, $H_{2,n}$ and $H_{n-1,n}$ are, respectively, a circuit and a complete graph, both with $n$ nodes.
The broadcast time in Harary graphs is given particular attention by \citet{bhabak14} and \citet{bhabak17}.

 \paragraph{De Bruijn graph\change s\stopchange}
Each node of a $d$-dimensional De Bruijn graph is represented by a binary string of length $d$.
Two distinct nodes $u$ and $v$ are neighbours if and only if \change the string corresponding to \stopchange $u$
is obtained by shifting all binary digits of \change the string corresponding to \stopchange $v$ one position either left or right,
and either binary symbol is introduced in the vacant position.
Hence, the graph has $2^d$ nodes, each of which has degree at most 4.

 \paragraph{Shuffle exchange graph\change s\stopchange}
Like in De Bruijn graphs, the nodes of a shuffle exchange graph of order $d$ represent binary strings of length $d$. 
There is an edge between two distinct nodes $u$ and $v$ if and only if their corresponding strings are identical in all but their last bit,
or the string corresponding to $u$ is obtained by a left or a right cyclic shift of the bits of $v$.
Hence, the graph has $2^d$ nodes, each of which has degree at most 3.
\change

 \paragraph{Synthetic graphs}
\citet{lima22} have constructed MBT instances for which the minimum broadcast times are known in the single-source cases.
The graphs are designed by adding edges randomly to trees which are known to have broadcast time $\left\lceil\log|V|\right\rceil$ for an appropriate choice of source.
In each such instance, $|V|$ is a power of two, ranging from $2^5$ to $2^{10}$.
 \paragraph{Small world graphs}
The small world graphs included in the experiments consist of 100 or 1000 nodes with average degree ranging from two to six.
All of them are downloaded from the repository of \citet{rossi16}.

\subsection{Lower and upper bounds computed in polynomial time} \label{sec:bounds}
Tables \ref{tab:b-rgg}--\ref{tab:b-SW-1000} in the supplementary material show the node and edge cardinalities (columns 2--3) of all graphs in question.
For the corresponding single-source MBT instances, column 4 contains the lower bounds produced by Alg.\ \ref{alg:dreg},
column 5 contains the lower bounds $\max_{v\in V}sp_v$, and column 6 contains the upper bound found by the method of \citet{harutyunyan14}.
A lower bound is written in bold if it is a strongest lower bound, and an asterisk accompanies all upper bounds that coincide with a corresponding lower bound.
Analogous results for the double-source instances are given in columns 7--9.

\begin{table}\change \caption{\change Lower and upper bounds and their closeness to the best lower bound ($|S|\in\{1,2\}$)\stopchange} \label{tab:bounds}
\centering\begin{tabular}{lrrrcrrrcrrr}
& \multicolumn{2}{c}{Size} & no.\ ins- && \multicolumn{3}{c}{Relative closeness} && \multicolumn{3}{c}{no.\ instances equal} \\ \cline{2-3} \cline{6-8} \cline{10-12}
        Instance set & $|V|$ & $|E|$ & tances & & Alg.\ \ref{alg:dreg} &        $\max$ sp &               ub & & Alg.\ \ref{alg:dreg} &        $\max$ sp &               ub \\\hline
Geometric           &  400& 1220--1816&    8& &0.55 &1.00 &1.13 & &0 (0) &8 (8) &2  \\
Geometric           &  600& 1027--1861&    8& &0.20 &1.00 &1.01 & &0 (0) &8 (8) &7  \\
Geometric           &  800& 1034--1871&    8& &0.07 &1.00 &1.00 & &0 (0) &8 (8) &7  \\
Geometric           & 1000& 1447--2827&    8& &0.14 &1.00 &1.02 & &0 (0) &8 (8) &4  \\
Geometric           & 1200& 1940--4075&    8& &0.18 &1.00 &1.00 & &0 (0) &8 (8) &7  \\
Harary              &   17--100&   17--525&   32& &0.91 &0.75 &1.10 & &24 (19) &13 (8) &16  \\
Hypercube           &   32--1024&   80--5120&   12& &1.00 &0.94 &1.08 & &12 (3) &9 (0) &6  \\
CC cycles           &   24--896&   36--1344&   10& &0.89 &1.00 &1.15 & &2 (0) &10 (8) &0  \\
de Bruijn           &   16--1024&   31--2047&   14& &1.00 &0.91 &1.36 & &14 (10) &4 (0) &0  \\
Shuffle exchange    &   16--1024&   21--1533&   14& &0.83 &0.98 &1.08 & &2 (1) &13 (12) &5  \\
Synthetic           &   32&   31--156&   14& &1.00 &0.69 &1.27 & &14 (12) &2 (0) &2  \\
Synthetic           &   64&   63--558&   14& &1.00 &0.52 &1.34 & &14 (12) &2 (0) &2  \\
Synthetic           &  128&  127--2140&   14& &1.00 &0.47 &1.37 & &14 (12) &2 (0) &2  \\
Synthetic           &  256&  255--8307&   14& &1.00 &0.41 &1.38 & &14 (12) &2 (0) &2  \\
Synthetic           &  512&  511--33313&   14& &1.00 &0.37 &1.41 & &14 (12) &2 (0) &2  \\
Synthetic           & 1024&27259--131643&   12& &1.00 &0.24 &1.44 & &12 (12) &0 (0) &0  \\
Small world         &  100&  100&    6& &0.33 &1.00 &1.00 & &0 (0) &6 (6) &6  \\
Small world         &  100&  200&   36& &0.94 &0.97 &1.34 & &25 (8) &28 (11) &0  \\
Small world         &  100&  300&   18& &1.00 &0.71 &1.27 & &18 (17) &1 (0) &0  \\
Small world         & 1000& 1000&    6& &0.17 &1.00 &1.00 & &0 (0) &6 (6) &6  \\
Small world         & 1000& 2000&   36& &0.92 &0.93 &1.30 & &23 (18) &18 (13) &0  \\
Small world         & 1000& 3000&   18& &1.00 &0.76 &1.36 & &18 (15) &3 (0) &0  \\
\end{tabular}\stopchange
\end{table}

A summary of the results is given for each set of instances in Tab.\ \ref{tab:bounds}.
For the instance set identified by columns 1--3, where column 2 and 3 give the range of node and edge cardinalities, respectively,
column 4 gives the number of instances within the set.
Columns 5--7 give the average score of each lower and upper bound. 
When applied to a particular instance, the score is defined as the bound value divided by \emph{the best lower bound} obtained for that instance.
Thus, a score of a lower bound equal to 1.0 means that it is the best lower bound found, whereas a value smaller than 1.0 implies the converse.
Likewise, the score of the upper bound is 1.0 if the bound coincides with the best lower bound, and greater than 1.0 otherwise.
Closeness to 1.0 of the average score within an instance set thus reflects the strength of the bound when applied to the instances in question.
Columns 8--10 finally show the number of instances in which the respective bounds obtained the score 1.0.
For the lower bounds (columns 8--9), the number of instances in which it is the \emph{unique} bound to obtain this score is given in parentheses.

As could be expected, the tables show that in instances with an eccentric source node, such as the random geometric instances
(rows 1--5 in Tab.\ \ref{tab:bounds}, Tab.\ \ref{tab:b-rgg}) and the small world instances where
$|V|=|E|$ (row -6 and -3 of Tab.\ \ref{tab:bounds}, rows 1--3 of Tabs.\ \ref{tab:b-SW-100-}--\ref{tab:b-SW-1000}), the longest shortest path bound is largely dominant.
The method of \citet{harutyunyan14} is also able to compute an optimal solution in many of these instances, as the provided upper bound coincides with the best lower bound.
In 16 out of 20 (11 out of 20) of the single-source (double-source) random geometric instances (Tab.\ \ref{tab:b-rgg}), for example,
the broadcast time is computed and proved to be minimum uniquely by means of procedures with polynomial running time.
In instances with a more centrally located source node, such as the de Bruijn instances (row 9 of Tab.\ \ref{tab:bounds}, rows $-8,\ldots,-14$ of Tab.\ \ref{tab:b-misc})
and the instances on a rather dense small-world graph (row -1 of Tab.\ \ref{tab:bounds}, rows -1, -4, and -6 of Tab. \ref{tab:b-SW-1000}),
the lower bound of Alg.\ \ref{alg:dreg} dominates both $\left\lceil\log\frac{n}{\sigma}\right\rceil$ and $\max_{v\in V\setminus S}sp_v$.
The upper bounding method, however, fails to close the gap in these instances.

A comparison across all 324 instances shows that the bounds collapse in 42 single-source and 34 double-source instances.
All such instances are classified as \emph{trivial} and will not be pursued in experiments with more time-consuming methods.
Although the majority of the trivial instances have a single source, we find the difference to be too insignificant
to conclude whether double-source instances are generally more challenging than their single-source counterpart.

\subsection{Experiments with ILP approaches and upper-bounding heuristics} \label{sec:ilp}
For all non-trivial instances, Tabs.\ \ref{tab:misc-single}--\ref{tab:SW-1000-double} in the supplementary material show the lower and upper bounds
(optimal solutions if convergence within the time limit)
obtained by the model \eqref{mod:basic} of \citet{desousa18,desousa18b} and Alg.\ \ref{alg:ip}
(columns 2--3 and 4--5, respectively).
The tables also contain the upper bounds obtained by the metaheuristic of \citet{lima22} (column 6),
and the results from Alg.\ \ref{alg:match} with parameter values $\kappa=1,2,3,4$ (columns 7--10, respectively).
Bold-face numbers imply that the bound is no weaker than other bounds reported for the same instance, and an asterisk signifies that an upper bound is no larger than the sharpest lower bound.
A stroke (`--') means that the corresponding method failed to compute the bound in question,
while `\dag' is given to indicate that the solver was interrupted before the time limit because it ran out of memory.

A summary of the results is given in Tabs.\ \ref{tab:ub1}--\ref{tab:ub2}.
Column 2 of both tables gives the number of pursued instances within each set.
For the former ILP approach, columns 3--4 of Tab.\ \ref{tab:ub1} show the computed bounds relative to the lower bound produced by Alg.\ \ref{alg:ip}, averaged over all instances in the set.
Correspondingly, column 5 contains the average value of all upper bounds produced by Alg.\ \ref{alg:ip} relative to the lower bound.
Analogous results for the heuristic methods are given in the last five columns of the table.
Table \ref{tab:ub2} has a column ordering consistent with Tab.\ \ref{tab:ub1},
and shows the number of instances in which the respective bounds are identical to the lower bound produced by Alg.\ \ref{alg:ip}.

\begin{table}\change \caption{\change The table shows the relative closeness within instance sets to the best known lower bound, obtained by ILP approaches and heuristics. Each set consists of instances where $|S|\in\{1,2\}$.} \label{tab:ub1}
\centering\begin{tabular}{lrcrrcrcrcrrrr}
                         &          & &  \multicolumn{4}{c}{ILP approaches} && \multicolumn{6}{c}{Heuristics} \\ \cline{4-7} \cline{9-14}
                         &no.\ ins- & &  \multicolumn{2}{c}{de Sousa} && \multicolumn{1}{c}{Alg\ref{alg:ip}} && \multicolumn{1}{c}{Lima} && \multicolumn{4}{c}{Alg\ref{alg:match}} \\ \cline{4-5} \cline{7-7} \cline{9-9} \cline{11-14}
            Instance set & tances & &               lb &               ub & &               ub & &               ub & &              Ub1 &              Ub2 &              Ub3 &              Ub4 \\\hline
Geometric            & 13  &  & 1.00  & 1.00  &  & 1.00  &  & 1.01  &  & 1.10  & 1.11  & 1.07  & 1.07 \\
Harary               & 16  &  & 1.00  & 1.00  &  & 1.00  &  & 1.00  &  & 1.03  & 1.04  & 1.01  & 1.01 \\
Hypercube            & 6  &  & 1.00  & 1.02  &  & 1.02  &  & 1.04  &  & 1.12  & 1.12  & 1.10  & 1.02 \\
CC cycles            & 10  &  & 1.00  & 1.00  &  & 1.00  &  & 1.00  &  & 1.11  & 1.11  & 1.06  & 1.09 \\
de Bruijn            & 14  &  & 1.00  & 1.01  &  & 1.01  &  & 1.04  &  & 1.13  & 1.12  & 1.09  & 1.05 \\
Shuffle exchange     & 9  &  & 1.00  & 1.00  &  & 1.00  &  & 1.01  &  & 1.11  & 1.11  & 1.10  & 1.12 \\
Synthetic ($|V|=32$) & 12  &  & 1.00  & 1.00  &  & 1.00  &  & 1.00  &  & 1.13  & 1.07  & 1.02  & 1.00 \\
Synthetic ($|V|=64$) & 12  &  & 1.00  & 1.00  &  & 1.00  &  & 1.00  &  & 1.03  & 1.03  & 1.04  & 1.00 \\
Synthetic ($|V|=128$) & 12  &  & 1.00  & 1.00  &  & 1.00  &  & 1.00  &  & 1.01  & 1.03  & 1.00  & 1.00 \\
Synthetic ($|V|=256$) & 12  &  & 1.00  & 1.04  &  & 1.00  &  & 1.00  &  & 1.00  & 1.00  & 1.00  & 1.00 \\
Synthetic ($|V|=512$) & 12  &  & 1.00  & 1.48  &  & 1.02  &  & 1.04  &  & 1.00  & 1.00  & 1.00  & 1.00 \\
Synthetic ($|V|=1024$) & 12  &  & 0.99  & 1.43  &  & 1.43  &  & 1.05  &  & 1.00  & 1.00  & 1.00  & 1.00 \\
Small world ($|V|=10^2$, $|E|=2|V|$) & 36  &  & 1.00  & 1.00  &  & 1.00  &  & 1.00  &  & 1.11  & 1.12  & 1.09  & 1.05 \\
Small world ($|V|=10^2$, $|E|=3|V|$) & 18  &  & 1.00  & 1.00  &  & 1.00  &  & 1.00  &  & 1.09  & 1.11  & 1.05  & 1.03 \\
Small world ($|V|=10^3$, $|E|=2|V|$) & 36  &  & 0.99  & 1.03  &  & 1.03  &  & 1.12  &  & 1.16  & 1.16  & 1.15  & 1.13 \\
Small world ($|V|=10^3$, $|E|=3|V|$) & 18  &  & 0.96  & 1.08  &  & 1.03  &  & 1.13  &  & 1.11  & 1.10  & 1.07  & 1.05 \\
\end{tabular}\stopchange
\end{table}

\begin{table}\change \caption{\change The table shows the number of instances within a set where the best lower bound is met by ILP approaches and heuristics. Each set consists of instances where $|S|\in\{1,2\}$.} \label{tab:ub2}
\centering\begin{tabular}{lrcrrcrcrcrrrr}
                         &          & &  \multicolumn{4}{c}{ILP approaches} && \multicolumn{6}{c}{Heuristics} \\ \cline{4-7} \cline{9-14}
                         &no.\ ins- & &  \multicolumn{2}{c}{de Sousa} && \multicolumn{1}{c}{Alg\ref{alg:ip}} && \multicolumn{1}{c}{Lima} && \multicolumn{4}{c}{Alg\ref{alg:match}} \\ \cline{4-5} \cline{7-7} \cline{9-9} \cline{11-14}
            Instance set & tances & &               lb &               ub & &               ub & &               ub & &              Ub1 &              Ub2 &              Ub3 &              Ub4 \\\hline
Geometric            & 13  &  & 12  & 12  &  & 13  &  & 10  &  & 0  & 0  & 1  & 2 \\
Harary               & 16  &  & 16  & 16  &  & 16  &  & 16  &  & 14  & 13  & 15  & 15 \\
Hypercube            & 6  &  & 6  & 5  &  & 5  &  & 4  &  & 1  & 1  & 2  & 5 \\
CC cycles            & 10  &  & 10  & 10  &  & 10  &  & 10  &  & 2  & 2  & 4  & 3 \\
de Bruijn            & 14  &  & 14  & 12  &  & 13  &  & 9  &  & 3  & 3  & 6  & 8 \\
Shuffle exchange     & 9  &  & 9  & 9  &  & 9  &  & 7  &  & 0  & 1  & 2  & 1 \\
Synthetic ($|V|=32$) & 12  &  & 12  & 12  &  & 12  &  & 12  &  & 5  & 8  & 11  & 12 \\
Synthetic ($|V|=64$) & 12  &  & 12  & 12  &  & 12  &  & 12  &  & 10  & 10  & 9  & 12 \\
Synthetic ($|V|=128$) & 12  &  & 12  & 12  &  & 12  &  & 12  &  & 11  & 10  & 12  & 12 \\
Synthetic ($|V|=256$) & 12  &  & 11  & 11  &  & 12  &  & 12  &  & 12  & 12  & 12  & 12 \\
Synthetic ($|V|=512$) & 12  &  & 3  & 0  &  & 10  &  & 8  &  & 12  & 12  & 12  & 12 \\
Synthetic ($|V|=1024$) & 12  &  & 0  & 0  &  & 0  &  & 7  &  & 12  & 12  & 12  & 12 \\
Small world ($|V|=10^2$, $|E|=2|V|$) & 36  &  & 36  & 36  &  & 36  &  & 36  &  & 10  & 8  & 15  & 22 \\
Small world ($|V|=10^2$, $|E|=3|V|$) & 18  &  & 18  & 18  &  & 18  &  & 18  &  & 9  & 5  & 12  & 14 \\
Small world ($|V|=10^3$, $|E|=2|V|$) & 36  &  & 30  & 24  &  & 26  &  & 3  &  & 0  & 0  & 0  & 1 \\
Small world ($|V|=10^3$, $|E|=3|V|$) & 18  &  & 12  & 8  &  & 13  &  & 0  &  & 3  & 5  & 9  & 12 \\
\end{tabular}\stopchange
\end{table}

A comparison between the model \eqref{mod:basic} of \citet{desousa18,desousa18b} with Alg.\ \ref{alg:ip} in the single-source instance of graph \texttt{SW-1000-6-0d1-trial3}
(see Tab.\ \ref{tab:SW-1000-single}) shows that the former approach gives a better upper bound.
In all other instances, however, Alg.\ \ref{alg:ip} produces lower and upper bounds that are level with or better than those obtained by applying model \eqref{mod:basic}.
Moreover, the algorithm successfully finds the minimum broadcast time and proves its validity in all but 31 instances (217 out of 248 non-trivial instances are solved),
whereas the corresponding success rate of model \eqref{mod:basic} is 197 out of 248.
In their recent research, \citet{lima22} proved optimality in only three out of 30 single-source small-world instances with $|V|=1000$.
By virtue of Alg.\ \ref{alg:ip}, the minimum broadcast time is now known in 19 more of these instances (see Tab.\ \ref{tab:SW-1000-single}).

\stopchange

Let $\text{ub}(\alpha,\beta)$ denote the upper bound output by method $\alpha$ when applied to instance $\beta$,
and let $\text{ub}^{\min}(\beta)$ and $\text{ub}^{\max}(\beta)$ denote, respectively, the corresponding minimum and maximum values taken over all methods $\alpha$.
The \emph{performance profile} of method $\alpha$ is defined as the function $\varphi:[0,1]\to[0,1]$, where $\varphi(x)$ equals the proportion of instances $\beta$ in which 
\[
  \text{ub}(\alpha,\beta) - \text{ub}^{\min}(\beta) \leq \left(\text{ub}^{\max}(\beta) - \text{ub}^{\min}(\beta)\right)x.
\]

\begin{figure}
\begin{center}
\begin{tabular}{cc}
\includegraphics[scale=0.5]{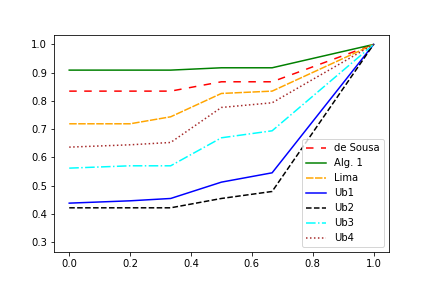} &
\includegraphics[scale=0.5]{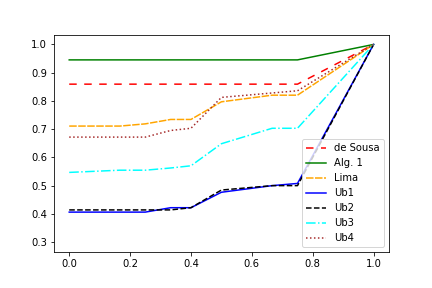} \\
\scriptsize{\textbf{One source}} & \scriptsize{\textbf{Two sources}}
\end{tabular}
\caption{\change Performance profiles of the upper bounding methods\stopchange} \label{fig:ubperf}
\end{center}
\end{figure}

Figure \ref{fig:ubperf} summarise all experiments reported in Tabs.\ \ref{tab:ub1}--\ref{tab:ub2} in terms of performance profiles.
Two separate sets of profiles are given for the cases $\sigma=1$ and $\sigma=2$ for comparison of \change all upper bounding methods (Fig.\ \ref{fig:ubperf}),
including the two time-constrained ILP approaches.

The dominance of Alg.\ \ref{alg:ip} (profile `Alg.\ 1') over the model of \citet{desousa18,desousa18b} (profile `de Sousa') is highly visible in Fig.\ \ref{fig:ubperf}.
Reflecting the fact that Alg.\ \ref{alg:ip} solves most of the instances to optimality, and provides the best upper bound in most of the remaining instances,
the ordinate values of the left-most points of the corresponding performance profiles are larger than 90\% and 95\% for the single-source and double-source experiments, respectively.

A comparison of the heuristic methods shows that in the single-source instances,
the genetic algorithm of \citet{lima22} (profile `Lima') performs better than Alg.\ \ref{alg:match} (profile `Ub$\kappa$') for all $\kappa=1,\ldots,4$.
For $\sigma=2$, however, this is true only when $\kappa\leq 3$, and Alg.\ \ref{alg:match} becomes competitive when $\kappa=4$.
The favourable performance of the genetic algorithm is also mainly explained by better results in the smaller instances.
Figure \ref{fig:large} depicts the performance profiles confined to the instances in which $|V|\geq 1000$, including both $\sigma=1$ and $\sigma=2$.
Among the instances excluded by this criterion, Alg.\ \ref{alg:ip} solves to optimality all but two single-source instances,
which justifies the focus to the restricted instance set.

\begin{figure}[ht!]
\begin{center}
\includegraphics[scale=0.7]{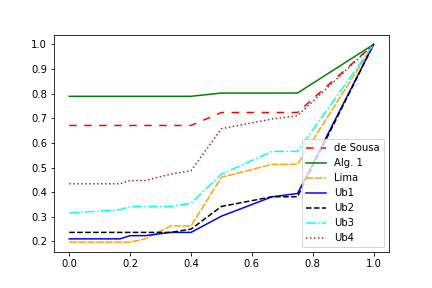}
\caption{\change Performance profiles of the upper bounding methods in instances where $|V|\geq 1000$\stopchange} \label{fig:large}
\end{center}
\end{figure}

\noindent
It is observed from Fig.\ \ref{fig:large} that Alg.\ \ref{alg:match} performs better than the genetic algorithm \citep{lima22}, provided that $\kappa\geq 3$.
For $\kappa=3$, the difference is modest, whereas it becomes significant for $\kappa=4$.
Figures \ref{fig:ubperf}--\ref{fig:large} also show that there is no added value of increasing the value of $\kappa$ from 1 to 2 in Alg.\ \ref{alg:match}.

\subsection{Solution time} \label{sec:time}

Tables \ref{tab:time-misc}--\ref{tab:time-SW} report solution times in seconds for all but one of the methods analysed in Section \ref{sec:ilp} in all instances of some computational challenge.
Since in the majority of the instances, the method of \citet{lima22} continues the search as long as the given time limit (60s for each of 21 seed values) is not reached,
it is excluded from the solution time analysis.
In an order consistent with Tabs.\ \ref{tab:misc-single}--\ref{tab:SW-1000-double}, columns 2--7 (columns 8--13) contain the running times for single-source (double-source) instances.
Arguing that running time is unlikely to be an issue in instances for which optimality is provable in a one-digit number of seconds,
we include only instances in which at least one method needs 10 seconds or more to conclude.
A stroke (`--') is given for trivial instances (see Section \ref{sec:bounds}), and, in line with Tabs.\ \ref{tab:misc-single}--\ref{tab:SW-1000-double},
the symbol `\dag' corresponds to runs interrupted by memory shortage.
For each instance set, average running times are given in Tab.\ \ref{tab:t}, where runs exhausting the memory are considered to take 3600 seconds.
\begin{table}[ht!]\change \caption{\change Running times (seconds) averaged over all instances ($|S|\in\{1,2\}$) in each set\stopchange } \label{tab:t}
\centering\begin{tabular}{lrrcrrrr}
                     & \multicolumn{2}{c}{ILP approaches} && \multicolumn{4}{c}{Heuristics} \\\cline{2-3} \cline{5-8}
Instance set         & \multicolumn{1}{c}{de Sousa} & \multicolumn{1}{c}{Alg\ref{alg:ip}} && Ub1& Ub2& Ub3& Ub4\\\hline
Geometric            &  831.7 &  162.0 &&    1.0 &    1.6 &    2.3 &    3.2 \\
Harary               &    0.4 &    0.1 &&    0.0 &    0.0 &    0.0 &    0.0 \\
Hypercube            &  675.5 &  696.4 &&    0.1 &    0.2 &    0.5 &    4.4 \\
CC cycles            &    4.2 &    7.7 &&    0.1 &    0.1 &    0.2 &    0.2 \\
de Bruijn            &  792.3 &  102.0 &&    0.1 &    0.1 &    0.2 &    0.3 \\
Shuffle exchange     &   42.1 &   17.4 &&    0.1 &    0.2 &    0.3 &    0.4 \\
Synthetic ($|V|=32$) &    0.2 &    0.1 &&    0.0 &    0.0 &    0.0 &    0.0 \\
Synthetic ($|V|=64$) &    1.1 &    0.2 &&    0.0 &    0.0 &    0.1 &    0.1 \\
Synthetic ($|V|=128$) &   49.7 &    1.9 &&    0.1 &    0.1 &    0.3 &    0.5 \\
Synthetic ($|V|=256$) & 1187.7 &   36.3 &&    0.2 &    0.6 &    1.6 &    3.6 \\
Synthetic ($|V|=512$) & 3607.2 &  573.5 &&    0.6 &    3.2 &   11.2 &   29.2 \\
Synthetic ($|V|=1024$) & 3600.0 & 3611.2 &&    2.4 &   18.0 &   81.2 &  287.5 \\
Small world ($|V|=10^2$, $|E|=2|V|$) &    2.6 &    0.4 &&    0.0 &    0.0 &    0.1 &    0.1 \\
Small world ($|V|=10^2$, $|E|=3|V|$) &    2.7 &    0.5 &&    0.0 &    0.0 &    0.1 &    0.2 \\
Small world ($|V|=10^3$, $|E|=2|V|$) & 2304.7 &  595.9 &&    0.2 &    0.4 &    0.8 &    3.3 \\
Small world ($|V|=10^3$, $|E|=3|V|$) & 2596.5 &  761.0 &&    0.3 &    0.4 &    0.9 &   13.6 \\
\end{tabular}\stopchange
\end{table}

Computational superiority of Alg.\ \ref{alg:ip} over model \eqref{mod:basic} is confirmed by the running times.
The latter approach is, however, faster in 10 instances.
The most significant difference in its favour is found in the double-source instance of graph \texttt{rgg-1000-2792},
in which Alg.\ \ref{alg:ip} needed almost seven times the running time of \eqref{mod:basic}.
Other instances that are exceptions to the general rule, are the graphs \texttt{cubeconnectedcycles7} ($\sigma=1,2$), \texttt{shuffle\_exchange10} ($\sigma=1$),
\texttt{rgg-1200-3855} ($\sigma=2$),
\texttt{hypercube8} ($\sigma=2$),
\texttt{hypercube9} ($\sigma=2$),
\texttt{SW-1000-4-0d3-trial2} ($\sigma=1$),
\texttt{SW-1000-6-0d2-trial3} ($\sigma=1$),
\texttt{SW-1000-5-0d1-trial3} ($\sigma=2$),
and \texttt{SW-1000-6-0d2-trial3} ($\sigma=2$).
But in 52 of the instances that both could solve to optimality, Alg.\ \ref{alg:ip} spent less than half the time the solver needed to solve the model of \citet{desousa18,desousa18b}.
A graphic illustration is given in Fig.\ \ref{fig:time}, which shows the running time of Alg.\ \ref{alg:ip} versus the model of \citet{desousa18,desousa18b} in all said instances.

\begin{figure}[ht!]
\begin{center}
\begin{tabular}{cc}
\includegraphics[scale=0.5]{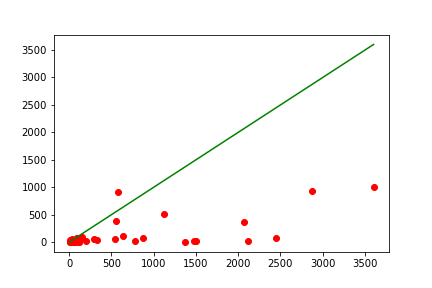} &
\includegraphics[scale=0.5]{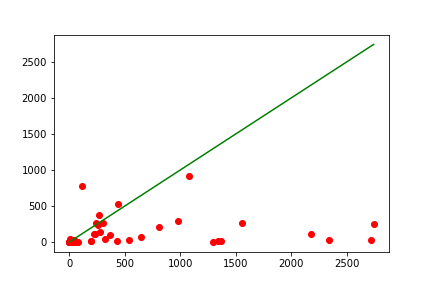} \\
\scriptsize{\textbf{One source}} & \scriptsize{\textbf{Two sources}}
\end{tabular}
\caption{\change Running times (seconds) of Alg.\ \ref{alg:ip} (vertical axis) vs running times of model \eqref{mod:basic} (horizontal axis)\stopchange} \label{fig:time}
\end{center}
\end{figure}

As expected, the running time of heuristic Alg.\ \ref{alg:match} increases with increasing value of the parameter $\kappa$.
In all small world instances but one, however, and in all other instances except five (six) of the more challenging single-source (double-source) instances of synthetic graphs,
the running time is kept below two minutes, even for $\kappa=4$.

\begin{figure}[ht!]
\begin{center}
\begin{tabular}{cc}
\includegraphics[scale=0.5]{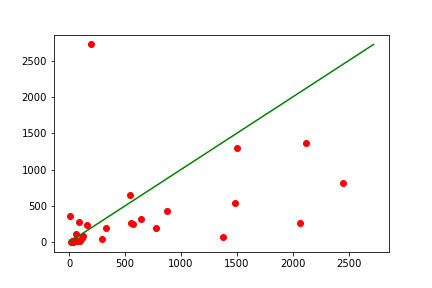} &
\includegraphics[scale=0.5]{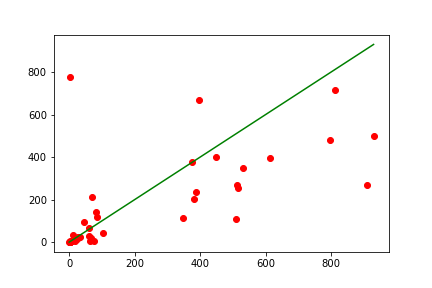} \\
\scriptsize{\textbf{Model \eqref{mod:basic}\citep{desousa18,desousa18b}}} &
\scriptsize{\textbf{Alg.\ \ref{alg:ip}}}
\end{tabular}
\caption{\change Running times (seconds) of the ILP approaches applied to double-source instances (vertical axis) vs single-source instances (horizontal axis)\stopchange} \label{fig:svsd}
\end{center}
\end{figure}

When comparing the single-source and the double-source instances corresponding to the same graph, the experiments give no conclusive evidence that either source cardinality is more or less challenging.
For both ILP approaches under consideration, Fig.\ \ref{fig:svsd} plots the running times of the double-source instances against the running time of its single-source counterpart.
This is done for all graphs where the ILP approach was able to solve both instances to optimality within the time limit.
Visual inspection suggests a bias towards the conclusion that the single-source instances are somewhat more challenging.
Algorithm \ref{alg:ip} fails to prove optimality in 16 single-source and 15 double-source instances.
Out of 40 graphs for which both instances are non-trivial, and the algorithm solves both to optimality, and needs at least 10 seconds to do so,
the single-source (double-source) instance is solved faster for 12 (28) graphs.
\stopchange

\section{Concluding Remarks} \label{sec:conc}

This work focuses on the minimum broadcast time problem, and presents several techniques for computing lower bounds, upper bounds, as well as optimal solutions.
Particular attention is given to a procedure which in each iteration solves an integer linear programming model.
When run exhaustively, this procedure solves the problem. Otherwise, it computes a lower bound on the broadcast time.
The same procedure applied to the continuous relaxation of the model is also capable of computing a lower bound.
Further, an upper-bounding iterative technique is studied.
This method solves a sequence of subproblems, each of which is a possibly small instance of the integer program.
With its parameter decisive for the size of the subproblem instances, the upper bounding method offers high flexibility in the trade-off between sharpness of the bound and computational effort.

For experimental evaluation of the computational procedures,
various instance classes of variable size are addressed.
While most instance sets are \change identical to those studied in a recently published work \stopchange on the same problem, also new, randomly generated instances are studied. 
The random instances are intended to simulate real communication networks.

Computational experiments demonstrate that \change the majority of the instances that cannot be solved by fast bound-computing algorithms, are solved 
by the procedure generating a sequence of integer linear programs.
When interrupted because the time limit is reached, the procedure produces bounds that are generally stronger than those produced within the same time limit by a previously studied ILP model.
In such instances, where the exact approach fails to prove optimality, the heuristic developed in the current work outputs solutions superior to those produced by a recently studied metaheuristic,
and does so with modest computational effort. \stopchange

There is a potential for future research in developing stronger upper bounding algorithms and improving the existing ILP model.
Although the model formulation is compact, its size represents a challenge due to a cubic number of variables.
Model improvements can be achieved by not only introduction of redundant valid inequalities,
but also by developing conceptually different models, where the number of variables is reduced by an order of magnitude.

\nocite{*}

\bibliographystyle{itor}
\bibliography{itor}

% Non-BibTeX users please use

\clearpage
\begin{appendices}
\begin{center}
{\Large SUPPLEMENTARY MATERIAL}
\end{center}

\section{Lower and upper bounds} \label{ap:table-b}
\begin{table}[ht!]\change \caption{\change Lower and upper bounds - geometric graphs on the unit sphere\stopchange } \label{tab:b-rgg}
\centering% [inline block 0: 15 envs, 121744 chars -> data_tex | \begin{tabular}{lrrcrrrcrrr}                          & \multicolumn{2}{c}{Size} && \multicolumn{3}{c}{$|S|=1$} && \mult...]
\stopchange
\end{table}

% \input{appx-results-single-source.tex}
% \clearpage
% \input{appx-results-multiple-sources.tex}
\end{appendices}

\end{document}